\newcommand{\assign}{:=}
\newcommand{\cdummy}{\cdot}
\newcommand{\minusassign}{-\!\!=}
\newcommand{\nobracket}{}
\newcommand{\nocomma}{}
\newcommand{\noplus}{}
\newcommand{\nosymbol}{}
\newcommand{\tmabbr}[1]{#1}
\newcommand{\tmem}[1]{{\em #1\/}}
\newcommand{\tmmathbf}[1]{\ensuremath{\boldsymbol{#1}}}
\newcommand{\tmop}[1]{\ensuremath{\operatorname{#1}}}
\newcommand{\tmstrong}[1]{\textbf{#1}}
\newcommand{\tmtextbf}[1]{{\bfseries{#1}}}
\newcommand{\tmverbatim}[1]{{\ttfamily{#1}}}
\newenvironment{enumeratealpha}{\begin{enumerate}[a{\textup{)}}] }{\end{enumerate}}
\newenvironment{itemizeminus}{\begin{itemize} }{\end{itemize}}
\newcommand{\kk}{\ensuremath{\mathbbm{K}}}
\newcommand{\nw}{\ensuremath{\mathfrak{n}}}
\newcommand{\ZZ}{\ensuremath{\mathbbm{Z}}}
\newcommand{\Kx}{\ensuremath{\mathbbm{K}[\tmmathbf{x}]}}
\newcommand{\HO}[1]{[}
\begin{document}

\title{Fast algorithm for border bases of Artinian Gorenstein algebras}

\author{
Bernard Mourrain\thanks{UCA,
Inria M\'editerran\'ee,
\textsc{aromath},
Sophia
Antipolis,
France, \email{bernard.mourrain@inria.fr}}
}

\date{April 25, 2017}

\maketitle

\begin{abstract}
  Given a multi-index sequence $\sigma$, we present a new efficient algorithm
  to compute generators of the linear recurrence relations between the terms
  of $\sigma$. We transform this problem into an algebraic one, by identifying
  multi-index sequences, multivariate formal power series and linear
  functionals on the ring of multivariate polynomials. In this setting, the
  recurrence relations are the elements of the kernel $I_{\sigma}$ of the
  Hankel operator $H_{\sigma}$ associated to $\sigma$. We describe the
  correspondence between multi-index sequences with a Hankel operator of
  finite rank and Artinian Gorenstein Algebras. We show how the algebraic
  structure of the Artinian Gorenstein algebra $\mathcal{A}_{\sigma}$
  associated to the sequence $\sigma$ yields the structure of the terms
  $\sigma_{\alpha}$ for all $\alpha \in \mathbbm{N}^n$. This structure is
  explicitly given by a border basis of $\mathcal{A}_{\sigma}$, which is
  presented as a quotient of the polynomial ring $\mathbbm{K} [x_1, \ldots,
  x_n]$ by the kernel $I_{\sigma}$ of the Hankel operator $H_{\sigma}$. The
  algorithm provides generators of $I_{\sigma}$ constituting a border basis,
  pairwise orthogonal bases of $\mathcal{A}_{\sigma}$ and the tables of
  multiplication by the variables in these bases. It is an extension of
  Berlekamp-Massey-Sakata (BMS) algorithm, with improved complexity bounds. We
  present applications of the method to different problems such as the
  decomposition of functions into weighted sums of exponential functions,
  sparse interpolation, fast decoding of algebraic codes, computing the
  vanishing ideal of points, and tensor decomposition. Some benchmarks
  illustrate the practical behavior of the algorithm.
\end{abstract}

\section{Introduction}

Discovering hidden structures from probing or sampling is a problem which
appears in many contexts and in many applications. An interesting instance of
this general problem is recovering the structure of a sequence of values, from
the knowledge of some of its terms. It consists in guessing any term of the
sequence from the first known terms. A classical way to tackle this problem,
which goes back to Bernoulli, is to find linear recurrence relations between
the first terms of a sequence, to compute the roots of the associated
characteristic polynomial and to deduce the expression of any term of the
sequence from these roots.

In this paper, we consider the structure discovering problem for
multi-index sequences
$\sigma = (\sigma_{\alpha})_{\alpha \in \mathbbm{N}^n} \in
\mathbbm{K}^{\mathbbm{N}^n}$
of values in a field $\mathbbm{K}$.  Given a finite set of values
$\sigma_{\alpha}$ for $\alpha \in \tmmathbf{a} \subset \mathbbm{N}^n$,
we want to guess a formula for the general terms of the sequence
$\sigma$.  An important step of this approach is to compute
characteristic polynomials of the sequence
$\sigma = (\sigma_{\alpha})_{\alpha \in \mathbbm{N}^n}$.  They
correspond to multi-index {\tmem{recurrence relations}} with constant
coefficients between the terms of $\sigma$.  The ideal of these
recurrence relation polynomials define an Artinian Gorenstein algebra.
We present a fast algorithm to compute a border basis of this ideal
from the first terms of the sequence $\sigma$. This method also yields
a basis of the Artinian Gorenstein algebra as well as its
multiplicative structure. 

\paragraph{Related works} The approach that we present is related to Prony's
method in the univariate case and to its variants
{\cite{swindlehurst_performance_1992}}, {\cite{roy_esprit-estimation_1989}},
{\cite{golub_separable_2003}}, {\cite{beylkin_approximation_2005}}, ... and to
the more recent extensions in the multivariate case
{\cite{andersson_nonlinear_2010}}, {\cite{potts_parameter_2013}},
{\cite{kunis_multivariate_2016}}, {\cite{sauer_pronys_2016-1}}. Linear algebra
tools are used to determine a basis of the quotient algebra
$\mathcal{A}_{\sigma}$ or to compute an $H$-basis for the presentation of
$\mathcal{A}_{\sigma}$. An analysis of the complexity of these approaches
yields bounds in $\mathcal{O} (\tilde{s}^3)$ (or $\mathcal{O}
(\tilde{s}^{\omega})$), where $\tilde{s}$ is the size of the Hankel matrices
involved in these methods, typically the number $\binom{d' + n}{n}$ of
monomials in degree at most $d' < \frac{d}{2}$ if the terms of the sequence
are known up to the degree $d$. The problem is also related to Pad{\'e}
approximants, well investigated in the univariate case
{\cite{brent_fast_1980}}, {\cite{beckermann_uniform_1994}},
{\cite{gathen_modern_2013}}, but much less developed in the multivariate case
{\cite{power_finite_1982}}, {\cite{cuyt_how_1999}}.

Finding recurrence relations is a problem which is also well developed in the
univariate case. Berlekamp {\cite{berlekamp_nonbinary_1968}} and Massey
{\cite{massey_shift-register_1969}} proposed an efficient algorithm to compute
such recurrence relations, with a complexity in $\mathcal{O} (r^2)$ where $r$
is the size of the minimal recurrence relation. Exploiting further the
properties of Hankel matrices, the complexity of computing recurrence
relations can be reduced in the univariate case to $\tilde{\mathcal{O}}(r)$.

Sakata extended Berlekamp-Massey algorithm to
multi-index sequences, computing a Gr{\"o}bner
basis of the polynomials in the kernel of a multi-index Hankel
matrix {\cite{sakata_finding_1988}}. See also
{\cite{saints_algebraic-geometric_1995}} for an analysis and overview of the
algorithm. The computation of multivariate linear recurrence relations have
been further investigated, e.g. in {\cite{fitzpatrick_finding_1990}} and more
recently in {\cite{berthomieu_linear_2015}}, where the coefficients of the
Gr{\"o}bner basis are computed by solving multi-index Hankel systems.

\paragraph{Contributions} We translate the structure discovering
problem into an algebraic setting, by identifying multi-index
sequences of values, generating formal power series and linear
functionals on the ring of polynomials. Through this identification,
we associate to a multi-index sequence ${\sigma}$, a Hankel operator
$H_{\sigma}$ which kernel $I_{\sigma}$ defines an Artinian Gorenstein Algebra
$\mathcal{A}_{\sigma}$ when $H_{\sigma}$ is of finite rank.  We present a new efficient algorithm to
compute the algebraic structure of $\mathcal{A}_{\sigma}$, using the first terms $\sigma_{\alpha}$ for
$\alpha \in \tmmathbf{a} \subset \mathbbm{N}^n$. The structure
$\mathcal{A}_{\sigma}$ is described by a border basis of the ideal
$I_{\sigma}$.

This algorithm is an extension of the Berlekamp-Massey-Sakata (BMS)
algorithm.  It computes border bases of the recurrence relations,
which are more general than Gr{\"o}bner bases. They also offer a
better numerical stability \cite{mourrain_stable_2008} in the solving
steps required to address the decomposition problem. The algorithm,
based on a Gram-Schmidt orthogonalisation process, is simplified. The
complexity bound also improves the previously known bounds for
computing such recurrence relations. We show that the arithmetic
complexity of computing a border basis is in
$\mathcal{O} ((r + \delta) r s)$ where $r$ is the number of roots of
$I_{\sigma}$ (counted with multiplicities), $\delta$ is the size of
the border of the monomial basis and $s$ is the number of known terms
of the sequence $\sigma$.

The algorithm outputs generators of the recurrence relations, a
monomial basis, an orthogonal basis and the tables of multiplication
by the variables in this basis of $\mathcal{A}_{\sigma}$.  The
structure of the terms of the sequence $\sigma$ can be deduced from this output, by
applying classical techniques for solving polynomial systems from
tables of multiplication. We show how the algorithm can be applied to
different problems such as the decomposition of functions into
weighted sums of exponential functions, sparse interpolation, fast
decoding of algebraic codes, vanishing ideal of points, and tensor
decomposition.

\paragraph{Notation} Let $\mathbbm{K}$ be a field, $\bar{\mathbbm{K}}$ its
algebraic closure, $\mathbbm{K} [x_1, \ldots, x_n] =\mathbbm{K} [\tmmathbf{x}]$
be the ring of polynomials in the variables $x_1, \ldots, x_n$ with
coefficients in the field $\mathbbm{K}$, $\mathbbm{K} [[y_1, $ $\ldots, y_n]]
=\mathbbm{K} [\tmmathbf{y}]$ be the ring of formal power series in the
variables $y_1, \ldots, y_n$ with coefficients in $\mathbbm{K}$. We denote by
$\mathbbm{K}^{\mathbbm{N}^n}$ the set of sequences $\sigma =
(\sigma_{\alpha})_{\alpha \in \mathbbm{N}^n}$ \ of numbers $\sigma_{\alpha}
\in \mathbbm{K}$, indexed by $\mathbbm{N}^n$. $\forall \alpha = (\alpha_1,
\ldots, \alpha_n) \in \mathbbm{N}^n,$ $\alpha ! = \prod_{i =
1}^n \alpha_i !$, $\tmmathbf{x}^{\alpha} = \prod_{i = 1}^n x_i^{\alpha_i} $.
The monomials in $\mathbbm{K} [\tmmathbf{x}]$ are the elements of the form
$\tmmathbf{x}^{\alpha}$ for $\alpha \in \mathbbm{N}^n$. For a set $B \subset
\mathbbm{K} [\tmmathbf{x}]$, $B^+ = \cup_{i = 1}^n x_i B \cup B$, $\partial B
= B^+ \setminus B$. A set $B$ of monomials of $\mathbbm{K} [\tmmathbf{x}]$ is
connected to $1$, if $1 \in B$ and for $\tmmathbf{x}^{\beta} \in B$ different
from $1$, there exists $\tmmathbf{x}^{\beta'} \in B$ and $i \in [1, n]$ such
that $\tmmathbf{x}^{\beta} = x_i \tmmathbf{x}^{\beta'}$. For $F \subset
\mathbbm{K} [\tmmathbf{x}]$, $\langle F \rangle$ is the vector space of
$\mathbbm{K} [\tmmathbf{x}]$ spanned by $F$ and $(F)$ is the ideal generated
by $F$. For $V,V'\subset \mathbbm{K} [\tmmathbf{x}]$, $V\cdummy V'$ is
the set of products of an element of $V$ by an element of $V'$.

\section{ Polynomial-Exponential series}

In this section, we recall the correspondence between sequences $\sigma =
(\sigma_{\alpha})_{\alpha \in \mathbbm{N}^n} \in \mathbbm{K}^{\mathbbm{N}^n}$
associated to polynomial-exponential series and Artinian Gorenstein
Algebras.

\subsection{Duality}

A sequence $\sigma = (\sigma_{\alpha})_{\alpha \in \mathbbm{N}^n} \in
\mathbbm{K}^{\mathbbm{N}^n}$ is naturally associated to a linear form operating
on polynomials, that is, an element of $\tmop{Hom}_{\mathbbm{K}} (\mathbbm{K}
[\tmmathbf{x}], \mathbbm{K}) = \Kx^{\ast}$, as follows:
\[ p = \sum_{\alpha \in A \subset \mathbbm{N}^n} p_{\alpha}
   \tmmathbf{x}^{\alpha} \in \mathbbm{K} [\tmmathbf{x}] \mapsto \langle \sigma
   \mid p \rangle = \sum_{\alpha \in A \subset \mathbbm{N}^n} p_{\alpha}
   \sigma_{\alpha} . \]
This correspondence is bijective since a linear form $\sigma \in \Kx^{\ast}$ is
uniquely defined by the sequence $\sigma_{\alpha} = \langle \sigma \mid
\tmmathbf{x}^{\alpha} \rangle$ for $\alpha \in \mathbbm{N}^n$. The
coefficients\tmtextbf{} $\sigma_{\alpha} = \langle \sigma \mid
\tmmathbf{x}^{\alpha} \rangle$ for $\alpha \in \mathbbm{N}^n$ are also called
the {\tmem{moments}} of $\sigma$. Hereafter, we will identify
$\mathbbm{K}^{\mathbbm{N}^n}$ with $\Kx^{\ast} = \tmop{Hom}_{\mathbbm{K}}
(\mathbbm{K} [\tmmathbf{x}], \mathbbm{K})$.

The dual space $\Kx^{\ast}$ has a natural structure of $\Kx$-module, defined as
follows: $\forall \sigma \in \Kx^{\ast}, \forall p, q \in \Kx$,
\begin{eqnarray*}
  \langle p \star \sigma \mid q \rangle & = & \langle \sigma
  \mid p q \rangle .
\end{eqnarray*}
We check that $\forall \sigma \in \Kx^{\ast}, \forall p
\nocomma, q \in \Kx$, $(p q) \star \sigma = p \star (q \star \sigma)$.
See e.g. {\cite{emsalem_geometrie_1978}}, {\cite{mourrain_isolated_1996}} for
more details.

For any $\sigma \in \Kx^{\ast}$, the inner product associated to $\sigma$ on
$\mathbbm{K} [\tmmathbf{x}]$ is defined as follows:
\begin{eqnarray*}
  \Kx \times \Kx & \rightarrow & \mathbbm{K}\\
  (p, q) & \mapsto & \langle p, \nosymbol \nosymbol q \rangle_{\sigma} \assign
  \langle \sigma | \nobracket p \nosymbol \nosymbol q \rangle .
\end{eqnarray*}
Sequences in $\mathbbm{K}^{\mathbbm{N}^n}$ are also in correspondence with
series in $\mathbbm{K} [[\tmmathbf{z}]]$, via the so-called
$\tmmathbf{z}$-transform:
\[ \sigma = (\sigma_{\alpha})_{\alpha \in \mathbbm{N}^n} \in
   \mathbbm{K}^{\mathbbm{N}^n} \mapsto \sigma (\tmmathbf{z}) = \sum_{\alpha
   \in \mathbbm{N}^n} \sigma_{\alpha} \tmmathbf{z}^{\alpha} \in \mathbbm{K}
   [[\tmmathbf{z}]] . \]
If $\mathbbm{K}$ is a field of characteristic $0$, we can identify the
sequence $\sigma = (\sigma_{\alpha})_{\alpha \in \mathbbm{N}^n} \in
\mathbbm{K}^{\mathbbm{N}^n}$with the series $\sigma (\tmmathbf{y}) =
\sum_{\alpha \in \mathbbm{N}^n} \sigma_{\alpha} 
\frac{\tmmathbf{y}^{\alpha}}{\alpha !} \in \mathbbm{K} [[\tmmathbf{y}]] .$
Using this identification, we have \ $\forall p \in \mathbbm{K}
[\tmmathbf{x}], \forall \sigma \in \mathbbm{K} [[\tmmathbf{y}]]$, $p \star
\sigma (\tmmathbf{y} \nosymbol \nosymbol) = p (\partial_{y_1}, \ldots,
\partial_{y_n}) (\sigma (\tmmathbf{y}))$.

Through these identifications, the dual basis of the monomial basis
$(\tmmathbf{x}^{\alpha})_{\alpha \in \mathbbm{N}^n}$ is
$(\tmmathbf{z}^{\alpha})_{\alpha \in \mathbbm{N}^n}$ in $\mathbbm{K}
[[\tmmathbf{z}]]$ and $\left( \frac{\tmmathbf{y}^{\alpha}}{\alpha !}
\right)_{\alpha \in \mathbbm{N}^n}$ in $\mathbbm{K} [[\tmmathbf{y}]]$.

Among the elements of $\tmop{Hom} (\mathbbm{K} [\tmmathbf{x}],
\mathbbm{K})$, we have the evaluation $\tmmathbf{e}_{_{\xi}} : p
(\tmmathbf{x}) \in \mathbbm{K} [\tmmathbf{x}] \mapsto p (\xi) \in
\mathbbm{K}$ at a point $\xi \in \mathbbm{K}^n$, which corresponds to the
sequence $(\xi^{\alpha})_{\alpha \in \mathbbm{N}^n}$ or to the series
$\tmmathbf{e}_{_{\xi}} (\tmmathbf{z}) = \sum_{\alpha \in \mathbbm{N}^n}
\xi^{\alpha} \tmmathbf{z}^{\alpha} = \hspace{0.25em}  \frac{1
\hspace{0.25em}}{\prod_{i = 1}^n (1 - \xi_i z_i)} \in \mathbbm{K}
[[\tmmathbf{z}]]$, or to the series $\tmmathbf{e}_{_{\xi}} (\tmmathbf{y}) =
\sum_{\alpha \in \mathbbm{N}^n} \xi^{\alpha} 
\frac{\tmmathbf{y}^{\alpha}}{\alpha !} = \hspace{0.25em} e^{\xi_1 y_1 + \cdots
+ \xi_n y_n} = e^{\langle \xi, \mathbf{y} \rangle}$ in $\mathbbm{K}
[[\tmmathbf{y}]] $. These series belong to the more general family
$\mathcal{{POLYEXP}}$ of polynomial-exponential series $\sigma = \sum_{i
= 1}^r \omega_i \tmmathbf{e}_{\xi_i} \in
\mathbbm{K} [[\tmmathbf{y}]]$ with $\xi_i \in \mathbbm{K}^n, \omega_i
\in \mathbbm{K} [\tmmathbf{y}]$. This set corresponds in
$\mathbbm{K} [[\tmmathbf{z}]]$ to the set of series of the form
\[ \sigma = \sum_{i = 1}^r \sum_{\alpha \in A_i} \frac{\omega_{i, \alpha}
   \tmmathbf{z}^{\alpha} \hspace{0.25em}}{\prod_{j = 1}^n (1 - \xi_{i, j}
   z_j)^{1 + \alpha_j}} \]
with $\xi_i \in \mathbbm{K}^n, \omega_{i, \alpha} \in \mathbbm{K}
\nocomma, \alpha \in A_i \subset \mathbbm{N}^n$ and $A_i$ finite.

\begin{definition}
  For a subset $D \subset \mathbbm{K} [[\tmmathbf{y}]]$, the {\tmem{inverse
      system}} generated by $D$ is the vector space spanned by the elements
  $p \star \delta$ for $\delta \in  D$, $p \in \mathbbm{K} [\tmmathbf{x}]$, that is, by the
  elements in $D$ and all their derivatives. \
  
  For $\omega \in \mathbbm{K} [\tmmathbf{y}]$, we denote by ${\mu}
  (\omega)$ the dimension of the inverse system of $\omega$, generated by $\omega$ and
  all its derivatives. For $\sigma = \sum_{i = 1}^r \omega_i
  \tmmathbf{e}_{\xi_i} \in \mathcal{{POLYEXP}} (\tmmathbf{y})$, ${\mu} (\sigma) = \sum_{i =
  1}^r {\mu} (\omega_i)$.
\end{definition}

\subsection{Hankel operators}

The external product $\star$ allows us to define a
Hankel operator as a multiplication operator by a dual element $\in
\mathbbm{K} [\tmmathbf{x}]^{\ast}$: 

\begin{definition}
  The Hankel operator associated to an element $\sigma \in \mathbbm{K}
  [\tmmathbf{x}]^{\ast} =\mathbbm{K}^{\mathbbm{N}^n}$ is
  \begin{eqnarray*}
    H_{\sigma} : \mathbbm{K} [\tmmathbf{x}] & \rightarrow &
    \mathbbm{K} [\tmmathbf{x}]^{\ast}\\
    p = \sum_{\beta \in B} p_{\beta} \tmmathbf{x}^{\beta} & \mapsto & p \star
    \sigma = \left( \sum_{\beta \in B} p_{\beta} \sigma_{\alpha + \beta}
    \right)_{\alpha \in \mathbbm{N}^n} .
  \end{eqnarray*}
  Its kernel is denoted $I_{\sigma}$. We say that the series $\sigma$ has 
  finite rank $r \in \mathbbm{N}$ if $\tmop{rank} H_{\sigma} = r < \infty$.
\end{definition}

As $\forall p, q \in \mathbbm{K} [\tmmathbf{x}]$, \ $p q \star \sigma = p
\star (q \star \sigma)$, we easily check that $I_{\sigma} = \ker H_{\sigma}$
is an ideal of $\mathbbm{K} [\tmmathbf{x} \mathbf{}]$ and that
$\mathcal{A}_{\sigma} =\mathbbm{K} [\tmmathbf{x}] / I_{\sigma}$ is an
algebra.

Given a sequence $\sigma = (\sigma_{\alpha})_{\alpha \in \mathbbm{N}^n} \in
\mathbbm{K}^{\mathbbm{N}^n}$, the kernel of $H_{\sigma}$ is the set of
polynomials $p = \sum_{\beta \in B} p_{\beta} \tmmathbf{x}^{\beta}$ such that
$\sum_{\beta \in B} p_{\beta} \sigma_{\alpha + \beta}=0$ for all $\alpha \in
\mathbbm{N}^n$. This kernel is the set of {\tmem{linear recurrence relations}}
of the sequence $\sigma = (\sigma_{\alpha})_{\alpha \in \mathbbm{N}^n}$.

\begin{remark}
  The matrix of the operator $H_{\sigma}$ in the bases
  $(\tmmathbf{x}^{\alpha})_{\alpha \in \mathbbm{N}^n}$ and its dual basis
  $(\tmmathbf{z}^{\alpha})_{\alpha \in \mathbbm{N}^n}$ is
  \begin{eqnarray*}
    {}[H_{\sigma}] & = & (\sigma_{\alpha + \beta})_{\alpha, \beta \in
    \mathbbm{N}^n} = (\langle \sigma |  \nobracket
    \tmmathbf{x}^{\alpha + \beta}_{\nocomma} \rangle)_{\alpha, \beta \in
    \mathbbm{N}^n} .
  \end{eqnarray*}
\end{remark}

The coefficients of $[H_{\sigma}]$ depend only the sum of the multi-indices
indexing the rows and columns, which explains why it is called a
{\tmem{Hankel}} operator.

\

In the reconstruction problem, we are dealing with truncated
series with known coefficients $\sigma_{\alpha}$ for $\alpha$ in
a subset $\tmmathbf{a}$ of $\mathbbm{N}^n$. This leads to the definition of
truncated Hankel operators.

\begin{definition}
  For \ two vector spaces $V, V' \subset \mathbbm{K} [\tmmathbf{x}]$ and \
  $\sigma \in \langle V \cdummy V' \rangle^{\ast} \subset \mathbbm{K}
  [\tmmathbf{x}]^{\ast}$, the {\tmem{truncated Hankel operator}} on $(V, V')$,
  denoted by $H_{\sigma}^{V, V'}$, is the following map:
  \begin{eqnarray*}
    H^{V, V'}_{\sigma} : V & \rightarrow & V'^{\ast} =
    \tmop{Hom}_{\mathbbm{K}} (V', \mathbbm{K})\\
    p (\tmmathbf{x}) & \mapsto & p \star \sigma_{| V' \nobracket} .
  \end{eqnarray*}
\end{definition}

If $B = \{ b_1, \ldots, b_r \}$ (resp. $B' = \{ b_1', \ldots
\nocomma, b_r' \}$) is a basis of $V$ (resp. $V'$), then the matrix of the
operator $H_{\sigma}^{V, V'}$ in $B$ and the dual basis of $B'$ is
\[ [H_{\sigma}^{B, B'}] = (\langle \sigma |  \nobracket b_j b_i'  \rangle)_{1
   \leqslant i, j \leqslant r} . \]
If $B$ and $B'$ are monomial sets, we obtain the so-called {\tmem{truncated
moment matrix}} of $\sigma$:
\begin{eqnarray*}
  {}[H_{\sigma}^{B, B'}] & = & (\sigma_{\beta + \beta'})_{\beta' \in B', \beta
  \in B}
\end{eqnarray*}
(identifying a monomial $\tmmathbf{x}^{\beta}$ with its exponent $\beta$).
These structured matrices share with the classical univariate Hankel matrices
many interesting properties (see e.g. in {\cite{mourrain_multivariate_2000}}).

\subsection{Artinian Gorenstein algebra}

A $\mathbbm{K}$-algebra $\mathcal{A}$ is {\tmem{Artinian}} if
$\dim_{\mathbbm{K}} (\mathcal{A}) < \infty$.  It can be represented as the
quotient $\mathcal{A}=\mathbbm{K} [\tmmathbf{x}] / I$ of a polynomial ring
$\mathbbm{K} [\tmmathbf{x}]$ by a (zero-dimension) ideal $I \subset
\mathbbm{K} [\tmmathbf{x}]$. 

A classical result states that the quotient algebra $\mathcal{A} =\mathbbm{K}
[\tmmathbf{x}] / I$ is finite dimensional, i.e. Artinian, iff
$\mathcal{V}_{\bar{\mathbbm{K}}} (I)$ is finite, that is, $I$ defines a finite
number of (isolated) points in $\bar{\mathbbm{K}}^n$ (see e.g.
{\cite{cox_ideals_2015}}[Theorem 6] or
{\cite{elkadi_introduction_2007}}[Theorem 4.3]).

The dual $\mathcal{A}^{\ast} = \tmop{Hom}_{\mathbbm{K}} (\mathcal{A},
\mathbbm{K})$ of $\mathcal{A}=\mathbbm{K} [\tmmathbf{x}] / I$ is naturally
identified with the sub-space
\[ I^{\bot} = \{ \sigma \in \mathbbm{K} [\tmmathbf{x}]^{\ast} \mid \forall p
   \in I, \langle \sigma | \nobracket p \rangle = 0 \} . \]
A {\tmem{Gorenstein}} algebra is defined as follows:

\begin{definition}
  A $\mathbbm{K}$-algebra $\mathcal{A}$ is Gorenstein if $\exists \sigma \in
  \mathcal{A}^{\ast} = \tmop{Hom}_{\mathbbm{K}} (\mathcal{A}, \mathbbm{K})$
  such that $\forall \rho \in \mathcal{A}^{\ast}, \exists a \in
  \mathcal{A}$ with $\rho = a \star \sigma$ and $a \star \sigma = 0$ implies
  $a = 0$.
\end{definition}

In other words, $\mathcal{A}=\mathbbm{K} [\tmmathbf{x}] / I$ is Gorenstein iff
$\mathcal{A}^{\ast} = \{ p \star \sigma \mid p \in \mathbbm{K} [\tmmathbf{x}]
\} = \tmop{im} H_{\sigma}$ and $p \star \sigma = 0$ implies $p \in I$.
Equivalently, $\mathcal{A}=\mathbbm{K} [\tmmathbf{x}] / I$ is Gorenstein iff
there exists $\sigma \in \mathbbm{K} [\tmmathbf{x}]^{\ast}$ such that we have
the exact sequence:
\begin{equation}
  0 \rightarrow I \rightarrow \mathbbm{K} [\tmmathbf{x}]
  \xrightarrow{H_{\sigma}} \mathcal{A}^{\ast} \rightarrow 0 \label{eq:seq}
\end{equation}
so that $H_{\sigma}$ induces an isomorphism between $\mathcal{A}=\mathbbm{K}
[\tmmathbf{x}] / I$ and $\mathcal{A}^{\ast}$.
In other words, a Gorenstein algebra $\mathcal{A}$ is the quotient of a polynomial
ring by the kernel of a Hankel operator, or equivalently by an ideal of recurrence relations of a
multi-index sequence.

An Artinian Gorenstein can thus be described by an element $\sigma \in \mathbbm{K} [\tmmathbf{x}]^{\ast}$, such that
$\tmop{rank} H_{\sigma} = \dim \mathcal{A}^{\ast} = \dim \mathcal{A}$ is
finite. In the following, we will assume that the Artinian Gorenstein algebra is
given by such an element $\sigma \in \mathbbm{K} [\tmmathbf{x}]^{\ast} \equiv
\mathbbm{K}^{\mathbbm{N}^n}$. The corresponding algebra will be
$\mathcal{A}_{\sigma} =\mathbbm{K} [\tmmathbf{x}] / I_{\sigma}$ where
$I_{\sigma} = \ker H_{\sigma}$.

By a multivariate generalization of Kronecker's theorem
{\cite{mourrain_polynomial-exponential_2016}}[Theorem 3.1], the sequences
$\sigma$ such that $\tmop{rank} H_{\sigma} = r < \infty$ are the
polynomial-exponential series $\sigma \in \mathcal{{POLYEXP}}$ with
${\mu} (\sigma) = r$.

The aim of the method we are presenting, is to compute the structure of the
Artinian Gorenstein algebra $\mathcal{A}_{\sigma}$ from the first terms of the
sequence $\sigma = (\sigma_{\alpha})_{\alpha \in \mathbbm{N}^n}$. We are going
to determine bases of $\mathcal{A}_{\sigma}$ and generators of the ideal
$I_{\sigma}$, from which we can deduce directly the multiplicative structure
of $\mathcal{A}_{\sigma}$.

The following lemma gives a simple way to test the linear independency in
$\mathcal{A}_{\sigma}$ using truncated Hankel matrices (see
{\cite{mourrain_polynomial-exponential_2016}}[Lemma 3.3]):

\begin{lemma}
  \label{lem:basis} Let $\sigma\in \mathbbm{K} [\tmmathbf{x}]^{\ast}$, $B = \{ b_1, \ldots, b_r \}$, $B' = \{ b_1',\ldots,$ $
  b_r' \} \subset \mathbbm{K} [\tmmathbf{x}]$. If the matrix $H_{\sigma}^{B,
  B'} = (\langle \sigma |  \nobracket b_i b_j' \rangle)_{\in B, \beta' \in
  B'}$ is invertible, then $B$ (resp. $B'$) is linearly independent in
  $\mathcal{A}_{\sigma}$.
\end{lemma}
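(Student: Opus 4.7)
The plan is to argue by contradiction using the defining property of $I_\sigma = \ker H_\sigma$, together with the bilinearity of the pairing $\langle \sigma\mid\cdot\rangle$. Concretely, suppose $B$ is linearly dependent in $\mathcal{A}_\sigma$. Then there exist scalars $\lambda_1,\ldots,\lambda_r \in \mathbb{K}$, not all zero, such that the polynomial $p = \sum_{i=1}^r \lambda_i b_i$ lies in $I_\sigma$, i.e., $p \star \sigma = 0$ as an element of $\mathbb{K}[\mathbf{x}]^\ast$.

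Next I would test this vanishing linear functional against each element $b_j'$ of $B'$. Using the defining identity $\langle p \star \sigma \mid q\rangle = \langle \sigma \mid p\, q\rangle$ and linearity, I get, for every $j \in \{1,\ldots,r\}$,
\[
0 = \langle p \star \sigma \mid b_j'\rangle = \Big\langle \sigma \,\Big|\, \sum_{i=1}^r \lambda_i b_i b_j'\Big\rangle = \sum_{i=1}^r \lambda_i \,\langle \sigma \mid b_i b_j'\rangle .
\]
This is exactly the statement that the (nonzero) vector $(\lambda_1,\ldots,\lambda_r)^{\mathsf{T}}$ lies in the kernel of the matrix $H_\sigma^{B,B'}$, contradicting the assumed invertibility of $H_\sigma^{B,B'}$. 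Hence $B$ must be linearly independent in $\mathcal{A}_\sigma$.

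For the symmetric claim about $B'$, the key observation is that the entries $\langle \sigma\mid b_i b_j'\rangle$ are symmetric in the sense that multiplication in $\mathbb{K}[\mathbf{x}]$ is commutative, so the transpose of $H_\sigma^{B,B'}$ coincides (up to relabelling) with the Hankel matrix obtained by swapping the roles of $B$ and $B'$. Since invertibility is preserved under transposition, repeating the previous argument with a dependency $\sum_j \mu_j b_j' \in I_\sigma$ tested against the $b_i$ yields the corresponding contradiction, and $B'$ is linearly independent in $\mathcal{A}_\sigma$ as well.

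The argument is essentially a one-line duality computation; the only point to be careful about is the bookkeeping in the indexing of $H_\sigma^{B,B'}$ and the use of commutativity to deduce the conclusion for $B'$ from the same invertibility hypothesis. I do not anticipate any serious obstacle.
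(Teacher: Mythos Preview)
Your argument is correct and is the standard duality computation: a linear dependence of $B$ modulo $I_\sigma=\ker H_\sigma$ produces a nonzero vector annihilated by (the transpose of) $H_\sigma^{B,B'}$, contradicting invertibility; commutativity of the product handles $B'$ symmetrically. The paper does not actually supply a proof of this lemma---it simply cites \cite{mourrain_polynomial-exponential_2016}[Lemma 3.3]---so there is nothing further to compare; your write-up is exactly what one would expect that reference to contain.
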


This lemma implies that if $\dim \mathcal{A}_{\sigma} = r < + \infty$, $| B |
= | B' | = r = \dim \mathcal{A}_{\sigma}$ and $H_{\sigma}^{B, B'}$ is
invertible, then $(\tmmathbf{x}^{\beta})_{\beta \in B}$ and
$(\tmmathbf{x}^{\beta'})_{\beta' \in B'}$ are bases of $\mathcal{A}_{\sigma}$.

Given a Hankel operator $H_{\sigma}$ of finite rank $r$, it is clear that
the truncated operators will have at most rank $r$. We are going to use the
so-called {\tmem{flat extension}} property, which gives conditions under which
a truncated Hankel operator of rank $r$ can be extended to a Hankel operator
of the same rank (see {\cite{laurent_generalized_2009}} and extensions
{\cite{brachat_symmetric_2010}}, {\cite{bernardi_general_2013}},
{\cite{mourrain_polynomial-exponential_2016}}).

\begin{theorem}
  \label{thm:flatext}Let $V, V' \subset \mathbbm{K} [\tmmathbf{x}]$ be vector
  spaces connected to $1$, \tmtextbf{} such that $x_1, \ldots, x_n \in V$ and
  let $\sigma \in \langle V \cdummy V' \rangle^{\ast}$. Let $B \subset
  V$, $B' \subset V'$ such that $B^+ \subset V, B'^+ \subset V'$. If
  $\tmop{rank} H_{\sigma}^{V, V'} = \tmop{rank} H_{\sigma}^{B, B'} = r$, then
  there is a unique extension $\tilde{\sigma} \in \mathbbm{K}
  [[\tmmathbf{y}]]$ such that $\tilde{\sigma}$ coincides with $\sigma$ on
  $\langle V \cdot V' \rangle$ and $\tmop{rank} H_{\tilde{\sigma}} = r$. In
  this case, $\tilde{\sigma} \in \mathcal{{POLYEXP}}$ with $r ={\mu}
  (\sigma)$ and $I_{\tilde{\sigma}} = (\ker H_{\sigma}^{B^+, B'})$.
\end{theorem}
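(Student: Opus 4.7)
The plan is to build, out of the truncated data, a commuting family of multiplication operators on $\langle B \rangle$ and to use this family to extend $\sigma$ to an element $\tilde{\sigma} \in \mathbbm{K}[\tmmathbf{x}]^\ast$ of Hankel rank exactly $r$. The rank-equality hypothesis is precisely what makes this construction go through.

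First I would translate the rank hypothesis into a projection map. Since $H_\sigma^{B,B'}$ is an $r \times r$ submatrix of $H_\sigma^{V,V'}$ of the full rank $r$, it is invertible and the rows of $H_\sigma^{V,V'}$ indexed by $B$ span its whole image. Consequently, every $v \in V$ admits a unique $\pi(v) \in \langle B \rangle$ with $v - \pi(v) \in \ker H_\sigma^{V,V'}$, and for $v \in \langle B^+ \rangle$ the element $\pi(v)$ is equivalently the unique element of $\langle B \rangle$ with $v - \pi(v) \in \ker H_\sigma^{B^+,B'}$. Lemma \ref{lem:basis} gives the linear independence of $B$ and $B'$ in $\mathcal{A}_\sigma$. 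Since $x_i B \subset B^+ \subset V$, this yields linear operators $M_i : \langle B \rangle \to \langle B \rangle$, $b \mapsto \pi(x_i b)$, for $i = 1, \ldots, n$.

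The key technical step, and the main obstacle, is the commutativity $M_i M_j = M_j M_i$; this is where the symmetric hypothesis $B'^+ \subset V'$ is used. For $b \in B$, $b' \in B'$ the product $(x_j b)(x_i b') = (x_i b)(x_j b')$ lies in $\langle V \cdot V' \rangle$, so $\sigma$ evaluates on either form. Using $x_j b \equiv M_j(b)$ modulo $\ker H_\sigma^{V,V'}$ against $x_i b' \in V'$ gives $\langle \sigma \mid (x_j b)(x_i b') \rangle = \langle \sigma \mid (x_i M_j(b))\, b' \rangle$, and then reducing $x_i M_j(b) \in \langle B^+ \rangle$ against $b' \in B'$ produces $\langle \sigma \mid M_i M_j(b) \cdot b' \rangle$. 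The symmetric computation yields $\langle \sigma \mid M_j M_i(b) \cdot b' \rangle$. Since the two agree for every $b' \in B'$ and $H_\sigma^{B,B'}$ is invertible, $M_i M_j(b) = M_j M_i(b)$.

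With the $M_i$'s pairwise commuting, $\langle B \rangle$ becomes a $\mathbbm{K}[\tmmathbf{x}]$-module of dimension $r$; the projection $\pi$ extends by induction on degree via $\pi(x_i p) := M_i(\pi(p))$ to a surjection $\pi : \mathbbm{K}[\tmmathbf{x}] \to \langle B \rangle$ whose kernel is precisely the ideal $J := (\ker H_\sigma^{B^+,B'})$. I would then define $\tilde{\sigma}(p) := \langle \sigma \mid \pi(p) \rangle$, which is meaningful because $\langle B \rangle \subset V \subset \langle V \cdot V' \rangle$ (using $1 \in V'$). Routine checks show that $\tilde{\sigma}$ agrees with $\sigma$ on $\langle V \cdot V' \rangle$ and that $J \subset I_{\tilde{\sigma}}$; combined with $\tmop{rank} H_{\tilde{\sigma}} \geq \tmop{rank} H_\sigma^{B,B'} = r$ and $\dim \mathbbm{K}[\tmmathbf{x}]/J = r$, this forces $I_{\tilde{\sigma}} = J$ and $\tmop{rank} H_{\tilde{\sigma}} = r$. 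Uniqueness of the rank-$r$ extension is immediate, as any such $\tilde{\sigma}'$ must share the invertible block $H_\sigma^{B,B'}$ and hence the same quotient structure. Finally, the multivariate Kronecker theorem cited just above the statement \cite{mourrain_polynomial-exponential_2016} identifies rank-$r$ Hankel operators with polynomial-exponential series of multiplicity $r$, giving $\tilde{\sigma} \in \mathcal{POLYEXP}$ with $\mu(\tilde{\sigma}) = r$.
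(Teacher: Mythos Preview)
The paper does not actually prove Theorem \ref{thm:flatext}; it is stated with references to \cite{laurent_generalized_2009}, \cite{brachat_symmetric_2010}, \cite{bernardi_general_2013}, \cite{mourrain_polynomial-exponential_2016}, where the argument is carried out. Your proposal follows precisely the strategy of those references (in particular Laurent--Mourrain): use the rank equality to get an invertible block $H_\sigma^{B,B'}$ and a projection $\pi$ onto $\langle B\rangle$, build multiplication operators $M_i$ on $\langle B\rangle$, prove that they commute, extend $\pi$ to a $\mathbbm{K}[\tmmathbf{x}]$-module map onto $\langle B\rangle$, and define $\tilde\sigma$ through this projection. Your commutativity computation is correct and is indeed the place where $B'^+\subset V'$ is consumed; the final appeal to the generalized Kronecker theorem for the $\mathcal{POLYEXP}$ conclusion is also exactly how the cited papers finish.

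Two of the steps you call ``routine'' are the ones that carry real content in the references and are worth flagging. First, the agreement of $\tilde\sigma$ with $\sigma$ on all of $\langle V\cdot V'\rangle$ (not merely on $\langle B\cdot B'\rangle$) is proved by an induction that walks monomials of $V$ and $V'$ back towards $B$ and $B'$ one variable at a time; this is precisely where the connectedness-to-$1$ of $V$ and $V'$ is used, and in \cite{laurent_generalized_2009} it occupies a couple of paragraphs. Second, the identification $\ker\pi = (\ker H_\sigma^{B^+,B'})$ is a border-basis statement: the kernel $\ker H_\sigma^{B^+,B'}$ furnishes a complete rewriting family for $B$, and the commutativity of the $M_i$'s is exactly the criterion that makes it a border basis, so that $\mathbbm{K}[\tmmathbf{x}]=\langle B\rangle\oplus(\ker H_\sigma^{B^+,B'})$. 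With these two points expanded, your argument is complete and coincides with the approach of the literature the paper cites.
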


\subsection{Border bases}

We recall briefly the definition of border basis and the main properties, that
we will need. Let $B$ be a monomial set of $\mathbbm{K} [\tmmathbf{x}]$.

\begin{definition}
  A rewriting family $F$ for a (monomial) set $B$ is a set of polynomials $F =
  \{ f_i \}_{i \in \tmmathbf{i}} \subset \mathbbm{K} [\tmmathbf{x}]$ such that
  $f_i =\tmmathbf{x}^{\alpha_i} + b_i$ with $b_i \in \langle B \rangle$,
  $\alpha_i \in \partial B$, $\alpha_i \neq \alpha_j$ if $i \neq j$. The
  rewriting family $f$ is complete if $(\tmmathbf{x}^{\alpha_i})_{i \in
  \tmmathbf{i}} = \partial B$.
\end{definition}

The monomial $\tmmathbf{x}^{\alpha_i}$ is called the leading
monomial of $f_i$ and denoted $\gamma (f_i) .$

\begin{definition}
  A family $F \subset \mathbbm{K} [\tmmathbf{x}]$ is a border basis with
  respect to $B$ if it is a complete rewriting family for $B$ such that
  $\mathbbm{K} [\tmmathbf{x}] = \langle B \rangle \oplus (F)$.
\end{definition}

This means that any element of $\mathbbm{K} [\tmmathbf{x}]$ can be projected
along the ideal $I=(F)$ onto a unique element of $\langle B \rangle$. In other
words, $B$ is a basis of the quotient algebra $\mathcal{A}=\mathbbm{K}
[\tmmathbf{x}] / I$.

Let $B^{[0]} = B$ and for $k \in \mathbbm{N}$, $B^{[k + 1]} = (B^{[k]})^+$. If
$1 \in B$, then for any $p \in \mathbbm{K} [\tmmathbf{x}]$, there exist $k \in
\mathbbm{N}$, such that $p \in \langle B^{[k]} \rangle$.

For a complete rewriting family $F$ with respect to a monomial set $B$
containing $1$, a projection $\pi_F$ of $\mathbbm{K} [\tmmathbf{x}]$ on
$\langle B \rangle$ can be defined recursively on the set of monomials $m$ of
$\mathbbm{K} [\tmmathbf{x}]$ by
\begin{itemize}
  \item if $m \in B$, $\pi_F (m) = m$;
  
  \item if $m \in \partial B$, $\pi_F (m) = m - f$ where $f$ is the (unique)
  polynomial in $F$ for which $\gamma (f) = m$,
  
  \item if $m \in B^{[k + 1]} - B^{[k]}$ for $k > 1$, there exists $m' \in
  B^{[k]}$ and $i_0 \in [1, n]$ such that $m = x_{i_0} m'$. Let $\pi_F (m) =
  \pi_F (x_{i_0} \pi_F (m'))$.
\end{itemize}
This map defines a projector from $\mathbbm{K} [\tmmathbf{x}]$ onto $\langle B
\rangle$. The kernel of $\pi_F$ is contained in the ideal $(F)$. The family
$F$ is a border basis iff $\ker (\pi_F) = (F)$.

Checking that a complete rewriting family is a border basis reduces to
checking commutation properties. This leads to efficient algorithms to compute
a border basis. For more details, see {\cite{mourrain_generalized_2005}},
{\cite{mourrain_stable_2008}}, {\cite{mourrain_border_2012}}.

A special case of border basis is when the leading term $\gamma (f)$ of $f
\in F$ is the maximal monomial of $f$ for a monomial ordering $\succ$. Then
$F$ is a Gr{\"o}bner basis of $I$ for this monomial ordering $\succ$.

A border basis $F$ with respect to a monomial set $B$ gives directly the
tables of multiplication $M_i$ by the variables $x_i$ in the basis $B$. For a
monomial $b \in B$, $M_i (b) = \pi_F (x_i b) = x_i b - f$ with$f \in F$ such
that $\gamma (f) = x_i b$ if $x_i b \in \partial B$ and $f = 0$ otherwise.

\section{Border bases of series}\label{sec:5}

Given the first terms $\sigma_{\alpha}$ for $\alpha \in \tmmathbf{a}$ of the
sequence $\sigma = (\sigma_{\alpha})_{\alpha \in \mathbbm{N}^n} \in
\mathbbm{K}^{\mathbbm{N}^n}$, where $\tmmathbf{a} \subset \mathbbm{N}^n$ is a
finite set of exponents, we are going to compute a basis of
$\mathcal{A}_{\sigma}$ and generators of $I_{\sigma}$. We assume that the
monomial set $\tmmathbf{x}^{\tmmathbf{a}}=\{\tmmathbf{x}^{\alpha}, \alpha \in \tmmathbf{a}\}$ is connected to 1.

\subsection{Orthogonal bases of $\mathcal{A}_{\sigma}$}

An important step in the decomposition method consists in computing a basis
$B$ of $\mathcal{A}_{\sigma}$. In this section, we describe how to compute a
monomial basis $B = \{ \tmmathbf{x}^{\beta} \}$ and two other bases
$\tmmathbf{p}= (p_{\beta})$ and $\tmmathbf{q}= (q_{\beta})$, which are
pairwise orthogonal for the inner product $\langle \cdummy, \cdummy
\rangle_{\sigma}$:
\[ \langle p_{\beta}, q_{\beta'} \rangle_{\sigma} = \left\{ \begin{array}{ll}
     1 & \tmop{if} \beta = \beta'\\
     0 & \tmop{otherwise} .
   \end{array} \right. \]
To compute these pairwise orthogonal bases, we will use a projection process,
similar to Gram-Schmidt orthogonalization process. The main difference is that
we compute pairs $p_{\beta}, q_{\beta}$ of orthogonal polynomials. As
the inner product $\langle \cdummy, \cdummy \rangle_{\sigma}$ may be
isotropic, the two polynomials $p_{\beta}, q_{\beta}$ may not be
equal, up to a scalar. For a polynomial $f$ and two families of polynomials
$\tmmathbf{p}= [p_1, \ldots, p_l]$, $\tmmathbf{m}= [m_1, \ldots, m_l]$, we
will use the following procedure $\tmop{proj} (f, \tmmathbf{p},
\tmmathbf{m})$.

{\begin{algorithm}[H]\caption{\label{algo:proj}Orthogonal projection}{\tmstrong{Input:} $f \in \mathbbm{K}
[\tmmathbf{x}]$, $\tmmathbf{p}= [p_1, \ldots, p_l]$ and $\tmmathbf{m}= [m_1,
\ldots, m_l]$ such that $\langle p_i, m_j \rangle_{\sigma} = 0$ if $j < i$ \
and $\langle p_i, m_i \rangle_{\sigma} = 1$.
\begin{itemizeminus}
  \item $g = f$;
  
  \item for $i$ in $1 \ldots l$ do $g \,\minusassign \langle g, m_i
  \rangle_{\sigma} p_i$;
\end{itemizeminus}
{\tmstrong{Output:}} $g \assign \tmop{proj} (f, \tmmathbf{p}, \tmmathbf{m})$}
\end{algorithm}}

Algorithm \ref{algo:proj} corresponds to the Modified Gram-\-Schmidt
algorithm, when the scalar product is definite positive. It is known
to have a better numerical behavior than the direct Gram-Schmidt
orthogonalization process {\cite{trefethen_numerical_1997}}[Lecture
8].  It computes the polynomial
$\tmop{proj} (f, \tmmathbf{p}, \tmmathbf{m})$ characterized by the
following lemma.

\begin{lemma}
  \label{lem:proj}If $\langle p_i, m_j \rangle_{\sigma} = 0$ if $j < i$ \ and
  $\langle p_i, m_i \rangle_{\sigma} = 1$, 
there is a unique polynomial $g$ such that $g = f - \sum_{i = 1}^l
  \lambda_i p_i$ with $\lambda_i \in \mathbbm{K}$ and $\langle g, m_i
  \rangle_{\sigma} = 0$ for $i = 1, \ldots, l$. 
\end{lemma}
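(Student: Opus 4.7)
The plan is to observe that the orthogonality conditions $\langle g, m_k \rangle_{\sigma} = 0$ for $k = 1, \ldots, l$ are a linear system in the unknowns $\lambda_1, \ldots, \lambda_l$, and then show this system is triangular with unit diagonal, hence uniquely solvable. Existence can then either be read off the triangular back-substitution, or simply certified by verifying that Algorithm~\ref{algo:proj} produces a valid $g$.

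More concretely, writing $g = f - \sum_{i=1}^{l} \lambda_i p_i$, the requirement $\langle g, m_k \rangle_{\sigma} = 0$ becomes
\[
\sum_{i=1}^{l} \lambda_i \langle p_i, m_k \rangle_{\sigma} = \langle f, m_k \rangle_{\sigma}, \qquad k = 1, \ldots, l.
\]
By hypothesis $\langle p_i, m_k \rangle_{\sigma} = 0$ whenever $k < i$ and $\langle p_i, m_i \rangle_{\sigma} = 1$, so the coefficient matrix $(\langle p_i, m_k \rangle_{\sigma})_{1 \le i, k \le l}$ is triangular with $1$s on the diagonal and is in particular invertible, which gives uniqueness of $(\lambda_i)$ and hence of $g$.

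For existence, I would argue by induction that $\lambda_i := \langle f, m_i \rangle_{\sigma} - \sum_{j < i} \lambda_j \langle p_j, m_i \rangle_{\sigma}$ solves the system; this is just back-substitution in the triangular system above. Alternatively, one checks directly that the running value of $g$ in Algorithm~\ref{algo:proj} satisfies $\langle g, m_k \rangle_{\sigma} = 0$ for all indices $k$ already processed: at step $i$, subtracting $\langle g, m_i \rangle_{\sigma} p_i$ cancels the inner product with $m_i$ by the diagonal normalization $\langle p_i, m_i \rangle_{\sigma} = 1$, and preserves the already-established orthogonalities $\langle g, m_k \rangle_{\sigma} = 0$ for $k < i$, because $\langle p_i, m_k \rangle_{\sigma} = 0$ for $k < i$.

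There is no real obstacle here; the only point that requires a little care is keeping straight that the hypothesis $\langle p_i, m_j \rangle_{\sigma} = 0$ for $j < i$ makes the Gram matrix upper triangular (not lower), so that the new correction at step $i$ of the algorithm does not disturb the orthogonalities established at earlier steps. That is precisely why the modified Gram--Schmidt update order is correct and why the algorithm is well-defined despite the inner product possibly being isotropic.
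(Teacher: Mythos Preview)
Your proposal is correct and follows essentially the same approach as the paper: both arguments establish existence by verifying inductively that the running polynomial in Algorithm~\ref{algo:proj} is orthogonal to $m_1,\ldots,m_i$ after step $i$ (using $\langle p_i,m_k\rangle_\sigma=0$ for $k<i$ to preserve earlier orthogonalities and $\langle p_i,m_i\rangle_\sigma=1$ to cancel the new one), and both deduce uniqueness from the fact that the Gram matrix $(\langle p_i,m_k\rangle_\sigma)$ is triangular with unit diagonal, hence invertible. The only cosmetic difference is that you lead with the linear-system viewpoint and treat uniqueness first, whereas the paper runs the induction first and invokes invertibility at the end.
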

\begin{proof}
  We prove by induction on the index $i$ of the loop that $g$ is orthogonal to
  $[m_1, \ldots, m_i]$. For $i = 1$, $g = f - \langle f, m_1 \rangle_{\sigma}
  p_1$ is such that $\langle g, m_1 \rangle_{\sigma} = \langle f, m_1
  \rangle_{\sigma} - \langle f, m_1 \rangle_{\sigma}  \langle p_1, m_1
  \rangle_{\sigma}$ $ = 0$.

  If the property is true at step $k \leqslant l$, i.e.
  $\langle g, m_i \rangle_{\sigma} = 0$ for $i < k$, then $g' = g - \langle g,
  m_k \rangle_{\sigma} p_k$ is such that $\langle g, m_i \rangle_{\sigma} -
  \langle g, m_k \rangle_{\sigma} \langle p_k, m_i \rangle_{\sigma} = \langle
  g, m_i \rangle_{\sigma} = 0$ by induction hypothesis. By construction,
  $\langle g', m_k \rangle = \langle g, m_k \rangle_{\sigma} - \langle g, m_k
  \rangle_{\sigma} \langle p_k, m_k \rangle_{\sigma} = 0$ and the induction
  hypothesis is true for $k$. As the matrix $(\langle p_{\tmop{ij}}, m_i
  \rangle_{\sigma})_{1 \leqslant i, j \leqslant l}$ is invertible, there exists a
  unique polynomial of the form $g = f - \sum_{j = 1}^l \lambda_j p_j$, such
  that $\langle g, m_i \rangle_{\sigma} = 0$ for $i = 1, \ldots, l$, which
  concludes the proof of the lemma.
\end{proof}

Algorithm \ref{algo:1} for computing a border basis of $\mathcal{A}_{\sigma}$
proceeds inductively, starting from $\tmmathbf{p}= [], \tmmathbf{m}= [],
\tmmathbf{b}= []$, extending the basis $\tmmathbf{p}$ with a new polynomial
$p_{\alpha}$, orthogonal to the vector space spanned by
$\tmmathbf{m}$ for the inner product $\langle \cdummy, \cdummy
\rangle_{\sigma}$, extending $\tmmathbf{m}$ with a new monomial $m_{\alpha}$,
such that $\langle p_{\alpha}, m_{\alpha} \rangle_{\sigma} = 1$ and $\langle
p_{\beta}, m_{\alpha} \rangle = 0$ for $\beta \in \tmmathbf{b}$ and extending
$\tmmathbf{b}$ with $\alpha$.

{\begin{algorithm}[ht]\caption{\label{algo:1}Artinian Gorenstein border basis }{\tmstrong{Input:} the coefficients
$\sigma_{\alpha}$ of a series $\sigma \in \mathbbm{K} [[\tmmathbf{y}]]$ for
$\alpha \in \tmmathbf{a} \subset \mathbbm{N}^n$ with $\tmmathbf{a}$ a finite
set of exponents connected to $\mathbf{0}$.
\begin{itemizeminus}
  \item Let $\tmmathbf{b} \assign []$; $\tmmathbf{c} \assign []$;
  $\tmmathbf{d}= []$; $\tmmathbf{n} \assign [\tmmathbf{0}]$; $\tmmathbf{s}
  \assign \tmmathbf{a}$; $\tmmathbf{t} \assign \tmmathbf{a}$;
  
  \item while $\tmmathbf{n} \neq \emptyset$ do
  \begin{itemizeminus}
    \item for each $\alpha \in \tmmathbf{n}$,
    \begin{enumeratealpha}
      \item $p_{\alpha} \assign \tmop{proj} (\tmmathbf{x}^{\alpha},
      [p_{\beta}]_{\beta \in \tmmathbf{b}}, [m_{\beta}]_{\beta \in
      \tmmathbf{b}})$;
      
      \item find the first $\gamma \in \tmmathbf{t}$ such that
      $\tmmathbf{x}^{\gamma} p_{\alpha} \in \langle
      \tmmathbf{x}^{\tmmathbf{a}} \rangle$ and $\langle p_{\alpha},
      \tmmathbf{x}^{\gamma} \rangle_{\sigma} \neq 0$;
      
      \item if such a $\gamma$ exists then
      
      \ \ let $m_{\alpha} \assign \frac{1}{\langle p_{\alpha},
      \tmmathbf{x}^{\gamma} \rangle_{\sigma}} \tmmathbf{x}^{\gamma}
      ;$
      
      \hspace{-2cm} {\small\tmverbatim{[optional]}} \ \ \ \ \,let $q_{\alpha} \assign \tmop{proj} (m_{\alpha}, [q_{\beta}]_{\beta
      \in \tmmathbf{b}}, [p_{\beta}]_{\beta \in \tmmathbf{b}})$;
      
      \ \ add $\alpha$ to $\tmmathbf{b}$; remove $\alpha$ from
      $\tmmathbf{s}$;
      
      \ \ add $\gamma$ to $\tmmathbf{c}$; remove $\gamma$ from
      $\tmmathbf{t}$;
      
      else
      
      \ \ let $k_{\alpha} = p_{\alpha}$;
      
      \ \ add {\tmem{}}$\alpha$ to $\tmmathbf{d}$; remove $\alpha$ from
      $\tmmathbf{s}$;
      
      end;
    \end{enumeratealpha}
  \item $\tmmathbf{n} \assign \tmop{next} (\tmmathbf{b}, \tmmathbf{d},
    \tmmathbf{c}, \tmmathbf{s}) ;$
  \end{itemizeminus}
\end{itemizeminus}
{\tmstrong{Output:}}
\begin{itemizeminus}
  \item exponent sets $\tmmathbf{b}= [\beta_1, \ldots, \beta_r]$, $\tmmathbf{c}= [\gamma_1, \ldots, \gamma_r]$.
  
  \item bases $\tmmathbf{p}= [p_{\beta_i}]$, 
  {\small\tmverbatim{[optional]}} $\tmmathbf{q}= [q_{\beta_i}]$.
  
  \item the relations $\tmmathbf{k}= [p_{\alpha}]_{\alpha \in \tmmathbf{d}}$
  where $p_{\alpha} =\tmmathbf{x}^{\alpha} - \sum_{i = 1}^{r} \lambda_{\beta_i}
  p_{\beta_i}$ for $\alpha \in \tmmathbf{d}$.
\end{itemizeminus}}
\end{algorithm}} The main difference with Algorithm 4.1 in
{\cite{mourrain_polynomial-exponential_2016}} is the projection
procedure and the list of monomials $\tmmathbf{s}$, $\tmmathbf{t}$
used to generate new monomials and to perform the projections. The
lists
$\tmmathbf{b}, \tmmathbf{d}, \tmmathbf{c}, \tmmathbf{s}, \tmmathbf{t}$
are lists of exponents, identified with monomials.

We verify that at each loop of the algorithm, the lists 
$\tmmathbf{b}$, $\tmmathbf{d}$ and $\tmmathbf{s}$ 
are disjoint and 
$\tmmathbf{b} \cup \tmmathbf{d} \cup \tmmathbf{s}=\tmmathbf{a}$. 
We also verify that $m_{\alpha}$ are monomials up 
to a scalar, that the set of their exponents is $\tmmathbf{c}$, that 
$\tmmathbf{c}$ and $\tmmathbf{t}$ are disjoint and that $\tmmathbf{c} \cup 
\tmmathbf{t}=\tmmathbf{a}$. 

The algorithm uses the function $\tmop{next} (\tmmathbf{b}, \tmmathbf{d},
\mathbf{c}, \tmmathbf{s})$, which computes the set of monomials $\nw$ in $\partial \tmmathbf{b}
\cap \tmmathbf{s}$, which are not in $\tmmathbf{d}$ and such $\nw\cdummy \mathbf{c}\subset \mathbf{a}
=\tmmathbf{b} \cup \tmmathbf{d} \cup \tmmathbf{s}$. 

We denote by $\prec$ the order induced by the treatment of the monomials of
$\tmmathbf{a}$ in the loops of the algorithm, so that the monomials treated at
the $l^{\tmop{th}}$ loop are smaller than the monomials in $\tmmathbf{n}$ at
the $(l + 1)^{\tmop{th}} $ loop. For $\alpha \in \tmmathbf{a}$, we denote by
$\tmmathbf{b}_{\prec \alpha}$ the list of monomial exponents $\beta \in
\tmmathbf{b}$ with $\beta \prec \alpha$ and by $B_{\prec \alpha}$ the vector
space spanned by these monomials. For $\alpha \in \tmmathbf{b}$, let
$\tmmathbf{b}_{\preccurlyeq \alpha} =\tmmathbf{b}_{\prec \alpha} \cup
[\alpha]$.

The following properties are also satisfied during this algorithm:
\begin{lemma}
  \label{lem:semiortho}For $\alpha \in \tmmathbf{b}$, we have $\forall \beta
  \in \tmmathbf{b}_{\prec \alpha}$, $\langle p_{\alpha}, m_{\beta}
  \rangle_{\sigma} = 0$ and $\langle p_{\alpha}, m_{\alpha} \rangle_{\sigma} =
  1$. For $\alpha \in \tmmathbf{d}$, $\langle p_{\alpha},
  \tmmathbf{x}^{\gamma} \rangle_{\sigma} = 0$ for all $\gamma \in
  \tmmathbf{a}$ such that $\tmmathbf{x}^{\gamma} p_{\alpha} \in
  \langle \tmmathbf{x}^{\tmmathbf{a}} \rangle$.
\end{lemma}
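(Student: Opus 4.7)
The plan is to argue by induction on the processing order $\prec$ established by the algorithm's main loop, exploiting at each step the characterization of $\tmop{proj}$ given in Lemma~\ref{lem:proj}. Concretely, I assume the stated orthogonality relations hold for all exponents processed before $\alpha$, and then analyze the two cases ($\alpha \in \tmmathbf{b}$ versus $\alpha \in \tmmathbf{d}$) using the explicit construction of $p_\alpha$ and $m_\alpha$ in step (a)--(c) of Algorithm~\ref{algo:1}.

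First I would handle $\alpha \in \tmmathbf{b}$. The crucial observation is that when $\alpha$ is processed, the list $[m_\beta]_{\beta \in \tmmathbf{b}_{\prec\alpha}}$ is exactly the current list $\tmmathbf{m}$ passed to $\tmop{proj}$, and by the induction hypothesis the pairs $(p_\beta, m_\beta)$ for $\beta \in \tmmathbf{b}_{\prec\alpha}$ already satisfy $\langle p_\beta, m_{\beta'}\rangle_\sigma = 0$ for $\beta' \prec \beta$ and $\langle p_\beta, m_\beta\rangle_\sigma = 1$, so the hypotheses of Lemma~\ref{lem:proj} are met. Applying that lemma to $p_\alpha := \tmop{proj}(\tmmathbf{x}^\alpha, [p_\beta]_{\beta\in\tmmathbf{b}_{\prec\alpha}}, [m_\beta]_{\beta\in\tmmathbf{b}_{\prec\alpha}})$ gives $\langle p_\alpha, m_\beta\rangle_\sigma = 0$ for all $\beta \in \tmmathbf{b}_{\prec\alpha}$. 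Finally, since $m_\alpha$ is defined as $\tmmathbf{x}^\gamma / \langle p_\alpha, \tmmathbf{x}^\gamma\rangle_\sigma$, we get $\langle p_\alpha, m_\alpha\rangle_\sigma = 1$ by bilinearity.

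Next I would treat $\alpha \in \tmmathbf{d}$. This case is reached exactly when step (b) fails, i.e.\ no $\gamma \in \tmmathbf{t}$ with $\tmmathbf{x}^\gamma p_\alpha \in \langle\tmmathbf{x}^{\tmmathbf{a}}\rangle$ satisfies $\langle p_\alpha, \tmmathbf{x}^\gamma\rangle_\sigma \neq 0$. So the statement already holds for $\gamma \in \tmmathbf{t}$ by construction; the content of the claim is to extend it to $\gamma \in \tmmathbf{c}$. For this I use the partition $\tmmathbf{a} = \tmmathbf{c} \sqcup \tmmathbf{t}$ noted just before the lemma: each $\gamma \in \tmmathbf{c}$ equals $\gamma_\beta$ for a unique $\beta \in \tmmathbf{b}_{\prec\alpha}$, and by construction $m_\beta = \lambda_\beta \tmmathbf{x}^{\gamma_\beta}$ for a nonzero scalar $\lambda_\beta$. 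Then
\[
\langle p_\alpha, \tmmathbf{x}^\gamma\rangle_\sigma
\;=\; \lambda_\beta^{-1}\,\langle p_\alpha, m_\beta\rangle_\sigma \;=\; 0,
\]
using again that $p_\alpha$ was produced by $\tmop{proj}$ against the list $[m_\beta]_{\beta\in\tmmathbf{b}_{\prec\alpha}}$.

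I do not expect a real obstacle; the argument is essentially bookkeeping on top of Lemma~\ref{lem:proj}. The one subtle point to verify is that at the moment $\alpha$ is processed, the relevant lists $\tmmathbf{b}, \tmmathbf{c}, \tmmathbf{t}$ indeed correspond to $\tmmathbf{b}_{\prec\alpha}$ and that the partition $\tmmathbf{a} = \tmmathbf{c} \sqcup \tmmathbf{t}$ is maintained as an invariant of the loop, which was asserted in the paragraph preceding the lemma; this is what lets us reduce the condition ``$\gamma \in \tmmathbf{a}$'' to the two cleanly handled subcases above.
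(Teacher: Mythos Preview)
Your argument is correct and follows the same route as the paper's proof: apply Lemma~\ref{lem:proj} to get orthogonality of $p_\alpha$ against the earlier $m_\beta$, then in the $\alpha\in\tmmathbf{b}$ case read off $\langle p_\alpha,m_\alpha\rangle_\sigma=1$ from the definition of $m_\alpha$, and in the $\alpha\in\tmmathbf{d}$ case split $\tmmathbf{a}=\tmmathbf{c}\sqcup\tmmathbf{t}$, handling $\tmmathbf{t}$ by the failure of step~(b) and $\tmmathbf{c}$ via the fact that each $m_\beta$ is a nonzero scalar multiple of some $\tmmathbf{x}^{\gamma_\beta}$. Your version is slightly more explicit about the induction needed to guarantee the hypotheses of Lemma~\ref{lem:proj}, but otherwise it is the same proof.
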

\begin{proof}
  By construction, 
  \[ p_{\alpha} = \tmop{proj} (\tmmathbf{x}^{\alpha}, [p_{_{\beta}}]_{\beta
     \in \tmmathbf{b}_{\prec \alpha}}, [m_{\beta}]_{\beta \in
     \tmmathbf{b}_{\prec \alpha}}) \]
  is orthogonal to $m_{\beta}$ for $\beta \in \tmmathbf{b}_{\prec \alpha}$. We
  consider two exclusive cases: $\alpha \in \tmmathbf{b}$ and $\alpha \in
  \tmmathbf{d}$.
  \begin{itemize}
    \item If $\alpha \in \tmmathbf{b}$, then there exists
    $\tmmathbf{x}^{\gamma} \in \tmmathbf{s}$ such that $\langle p_{\alpha},
    \tmmathbf{x}^{\gamma} \rangle_{\sigma} \neq 0$. Thus $m_{\alpha} =
    \frac{1}{\langle p_{\alpha}, \tmmathbf{x}^{\gamma} \rangle_{\sigma}}
    \tmmathbf{x}^{\gamma}$ is such that $\langle p_{\alpha}, m_{\alpha}
    \rangle_{\sigma} = 1$. By construction, $\langle p_{\alpha}, m_{\beta}
    \rangle_{\sigma} = 0$ for $\beta \in \tmmathbf{b}_{\prec \alpha}$. \
    
    \item If $\alpha \in \tmmathbf{d}$, then there is no
    $\tmmathbf{x}^{\gamma} \in \tmmathbf{s}$ such that $\langle p_{\alpha},
    \tmmathbf{x}^{\gamma} \rangle_{\sigma} \neq 0$ and $\tmmathbf{x}^{\gamma}
    p_{\alpha} \in \langle \tmmathbf{x}^{\tmmathbf{a}} \rangle$. Thus
    $p_{\alpha}$ is orthogonal to $\tmmathbf{x}^{\gamma}$ for all $\gamma \in
    \tmmathbf{s}$ with $\tmmathbf{x}^{\gamma} p_{\alpha} \in \langle
    \tmmathbf{x}^{\tmmathbf{a}} \rangle$. By construction, $p_{\alpha}$ is
    orthogonal to $m_{\beta}$ for $\beta \in \tmmathbf{b}$. As $\tmmathbf{b}
    \cup \tmmathbf{s}=\tmmathbf{a}$, \ $\langle p_{\alpha},
    \tmmathbf{x}^{\gamma} \rangle_{\sigma} = 0$ for all $\gamma \in
    \tmmathbf{a}$ such that $\tmmathbf{x}^{\gamma} p_{\alpha} \in
    \langle \tmmathbf{x}^{\tmmathbf{a}} \rangle$.
  \end{itemize}
  This concludes the proof of this lemma.
\end{proof}

\begin{lemma}
  \label{lem:ortho}For $\alpha \in \tmmathbf{b}$, $\langle m_{\beta}
  \rangle_{\beta \in \tmmathbf{b}_{\preccurlyeq \alpha}} = \langle q_{\beta}
  \rangle_{\beta \in \tmmathbf{b}_{\preccurlyeq \alpha}}$ and the bases
  $\tmmathbf{p}= [p_{\beta}]_{\beta \in \tmmathbf{b}_{\preccurlyeq \alpha}},
  \tmmathbf{q}= [q_{\beta}]_{\beta \in \tmmathbf{b}_{\preccurlyeq \alpha}}$
  are pairwise orthogonal.
\end{lemma}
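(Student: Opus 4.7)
The plan is to prove both assertions simultaneously by induction on $\alpha \in \tmmathbf{b}$ with respect to the processing order $\prec$, strengthening the inductive statement to the full biorthogonality $\langle p_\beta, q_{\beta'} \rangle_\sigma = \delta_{\beta,\beta'}$ for $\beta, \beta' \in \tmmathbf{b}_{\preccurlyeq \alpha}$ together with the span equality $\langle m_\beta \rangle_{\beta \in \tmmathbf{b}_{\preccurlyeq \alpha}} = \langle q_\beta \rangle_{\beta \in \tmmathbf{b}_{\preccurlyeq \alpha}}$. The base case $\alpha = \tmmathbf{0}$ is immediate: $p_{\tmmathbf{0}} = 1$, $m_{\tmmathbf{0}}$ is a scalar multiple of some $\tmmathbf{x}^{\gamma}$, $q_{\tmmathbf{0}} = m_{\tmmathbf{0}}$, and normalization gives $\langle p_{\tmmathbf{0}}, q_{\tmmathbf{0}} \rangle_\sigma = 1$.

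For the inductive step, the first key point is to apply Lemma \ref{lem:proj} to the call $q_\alpha \assign \tmop{proj}(m_\alpha, [q_\beta]_{\beta\in \tmmathbf{b}_{\prec\alpha}}, [p_\beta]_{\beta\in \tmmathbf{b}_{\prec\alpha}})$. Its hypothesis requires $\langle q_{\beta_i}, p_{\beta_j}\rangle_\sigma = \delta_{i,j}$ for $\beta_i, \beta_j \in \tmmathbf{b}_{\prec\alpha}$; this follows from the induction hypothesis combined with the symmetry of $\langle \cdummy, \cdummy \rangle_\sigma$. The conclusion is that $q_\alpha = m_\alpha - \sum_{\beta \prec \alpha} \lambda_\beta q_\beta$ with $\langle q_\alpha, p_\beta \rangle_\sigma = 0$ for all $\beta \prec \alpha$. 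The span equality at level $\alpha$ is then immediate: unfolding $q_\beta \in \langle m_{\beta'} \rangle_{\beta' \preccurlyeq \beta}$ from the induction hypothesis, we see $q_\alpha \in \langle m_\beta \rangle_{\beta \preccurlyeq \alpha}$, and inverting the triangular change of basis gives the reverse inclusion.

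It remains to extend biorthogonality by checking $\langle p_\alpha, q_\beta \rangle_\sigma = 0$ for $\beta \prec \alpha$ and $\langle p_\alpha, q_\alpha \rangle_\sigma = 1$. For the former, write $q_\beta = \sum_{\beta' \preccurlyeq \beta} c_{\beta'} m_{\beta'}$ using the already-proven span equality at level $\beta$; since each such $\beta'$ lies in $\tmmathbf{b}_{\prec \alpha}$, Lemma \ref{lem:semiortho} yields $\langle p_\alpha, m_{\beta'}\rangle_\sigma = 0$ and hence $\langle p_\alpha, q_\beta\rangle_\sigma = 0$. For the latter, expand $q_\alpha = m_\alpha - \sum_{\beta \prec \alpha} \lambda_\beta q_\beta$ and apply what we just proved, leaving $\langle p_\alpha, q_\alpha \rangle_\sigma = \langle p_\alpha, m_\alpha\rangle_\sigma = 1$ by Lemma \ref{lem:semiortho}. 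The main obstacle here is a bookkeeping one, namely threading the two halves of biorthogonality correctly: Lemma \ref{lem:proj} directly yields $\langle q_\alpha, p_\beta\rangle_\sigma = 0$, while the complementary half $\langle p_\alpha, q_\beta\rangle_\sigma = 0$ is not given by the projection step and must instead be derived from Lemma \ref{lem:semiortho} via the induction hypothesis on spans — this is precisely why the stronger joint statement must be carried through the induction.
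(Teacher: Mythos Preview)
Your proof is correct and follows essentially the same route as the paper's: induction on $\alpha$ along the processing order $\prec$, using Lemma~\ref{lem:proj} (with the induction hypothesis supplying its premise) to control $q_\alpha$ and obtain the span equality, then Lemma~\ref{lem:semiortho} together with that span equality to get $\langle p_\alpha, q_\beta\rangle_\sigma = 0$ for $\beta \prec \alpha$, and finally the expansion of $q_\alpha$ to obtain $\langle p_\alpha, q_\alpha\rangle_\sigma = 1$. Your presentation is slightly more explicit about why the hypothesis of Lemma~\ref{lem:proj} holds and about tracking both halves of the biorthogonality, but the argument is the same.
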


\begin{proof}
  We prove it by induction on $\alpha$. If $\alpha =\tmmathbf{0}$ is not in
  {\tmem{{\tmstrong{b}}}}, then $\sigma_{\alpha} = 0$ for all $\alpha \in
  \tmmathbf{a}$, $\tmmathbf{p}$ and $\tmmathbf{q}$ are empty and the property
  is satisfied. If $\alpha =\tmmathbf{0}$ is in $\tmmathbf{b}$, then
  $p_{\alpha} = 1$ and $q_{\alpha} = m_{\alpha}$ is such that $\langle
  p_{\alpha}, m_{\alpha} \rangle_{\sigma} = 1$. The property is true for
  $\alpha =\tmmathbf{0}$.
  
  Suppose that it is true for all $\beta \in \tmmathbf{b}_{\prec \alpha}$. By
  construction, the polynomial $q_{\alpha} = \tmop{proj} (m_{\alpha},
  [q_{_{\beta}}]_{\beta \in \tmmathbf{b}_{\prec \alpha}},$ $[p_{\beta}]_{\beta
  \in \tmmathbf{b}_{\prec \alpha}})$ is orthogonal to $p_{\beta}$ for $\beta
  \prec \alpha$. By induction hypothesis, $[p_{\beta}]_{\beta \in
  \tmmathbf{b}_{\prec \alpha}}, \tmmathbf{q}= [q_{\beta}]_{\beta \in
  \tmmathbf{b}_{\prec \alpha}}$ are pairwise orthogonal, thus
  \[ q_{\alpha} = m_{\alpha} - \sum_{\beta \in \tmmathbf{b}_{\prec \alpha}}
     \langle p_{\beta}, m_{\alpha} \rangle_{\sigma} q_{\beta} . \]
  By the induction hypothesis, we deduce that
  \begin{eqnarray*}
    \langle m_{\beta} \rangle_{\beta \in \tmmathbf{b}_{\preccurlyeq \alpha}} &
    = & \langle m_{\beta} \rangle_{\beta \in \tmmathbf{b}_{\prec \alpha}} +
    \langle m_{\alpha} \rangle = \langle q_{\beta} \rangle_{\beta \in
    \tmmathbf{b}_{\prec \alpha}} + \langle m_{\alpha} \rangle\\
    & = & \langle q_{\beta} \rangle_{\beta \in \tmmathbf{b}_{\prec \alpha}} +
    \langle q_{\alpha} \rangle = \langle q_{\beta}
    \rangle_{\tmmathbf{b}_{\preccurlyeq \alpha}} .
  \end{eqnarray*}
  By Lemma \ref{lem:semiortho}, $p_{\alpha}$ is orthogonal to $m_{\beta}$ for
  $\beta \in \tmmathbf{b}_{\prec \alpha}$ and thus to $q_{\beta}$ for $\beta
  \in \tmmathbf{b}_{\prec \alpha}$. We deduce that
\begin{eqnarray*}
 \langle p_{\alpha}, q_{\alpha} \rangle_{\sigma} &=& \langle p_{\alpha},
     m_{\alpha} \rangle_{\sigma} - \sum_{\beta \in \tmmathbf{b}_{\prec
     \alpha}} \langle p_{\beta}, m_{\alpha} \rangle_{\sigma} \langle
     p_{\alpha}, q_{\beta} \rangle_{\sigma} \\&=& \langle p_{\alpha}, m_{\alpha}
     \rangle_{\sigma} = 1. 
\end{eqnarray*}
This shows that $[p_{\beta}]_{\beta \in \tmmathbf{b}_{\preccurlyeq \alpha}}$ 
  and $\tmmathbf{q}= [q_{\beta}]_{\beta \in \tmmathbf{b}_{\preccurlyeq
  \alpha}}$ are pairwise orthogonal and concludes the proof by induction.
\end{proof}

\begin{lemma}
  \label{lem:leadingterm}At the $l^{\tmop{th}}$ loop of the algorithm, the
  polynomials $p_{\alpha}$ for $\alpha \in
  \tmmathbf{n}$ are of the form
  $p_{\alpha} =\tmmathbf{x}^{\alpha} + b_{\alpha}$ with
  $b_{\alpha} \in B_{\prec \alpha}$.
\end{lemma}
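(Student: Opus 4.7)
The plan is to proceed by induction on the exponents of $\mathbf{a}$ with respect to the order $\prec$ induced by the processing order of the algorithm. The key observation is that step (a) of the algorithm defines $p_\alpha$ via the procedure $\operatorname{proj}$, which, by construction, produces an output of the shape $p_\alpha = \mathbf{x}^\alpha - \sum_\beta \lambda_\beta p_\beta$ with scalars $\lambda_\beta \in \mathbb{K}$ and $\beta$ ranging over the current list $\mathbf{b}$. I would first pin down precisely which $\beta$'s are actually in $\mathbf{b}$ at the moment $p_\alpha$ is computed: namely, exactly those processed in earlier iterations of the (inner and outer) loop, i.e.\ those with $\beta \prec \alpha$.

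For the base case I would handle $\alpha = \mathbf{0}$, which is processed in the very first iteration with $\mathbf{b} = []$. Then $\operatorname{proj}$ returns $p_{\mathbf{0}} = \mathbf{x}^{\mathbf{0}} = 1$, and the conclusion holds with $b_{\mathbf{0}} = 0$ (and the convention that $B_{\prec \mathbf{0}} = \{0\}$).

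For the inductive step, assuming the conclusion holds for every $\beta \prec \alpha$ with $\beta \in \mathbf{b}$, I would expand
\[
p_\alpha = \mathbf{x}^\alpha - \sum_{\beta \in \mathbf{b}_{\prec \alpha}} \lambda_\beta \, p_\beta
= \mathbf{x}^\alpha - \sum_{\beta \in \mathbf{b}_{\prec \alpha}} \lambda_\beta \bigl( \mathbf{x}^\beta + b_\beta \bigr),
\]
using the inductive hypothesis $p_\beta = \mathbf{x}^\beta + b_\beta$ with $b_\beta \in B_{\prec \beta}$. Since each $\beta \prec \alpha$, both $\mathbf{x}^\beta$ and $b_\beta$ lie in $B_{\prec \alpha}$, so the correction term $\sum_\beta \lambda_\beta (\mathbf{x}^\beta + b_\beta)$ belongs to $B_{\prec \alpha}$. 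This yields $p_\alpha = \mathbf{x}^\alpha + b_\alpha$ with $b_\alpha \in B_{\prec \alpha}$, as required.

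The only subtle point, and the one I would be most careful about, is the order bookkeeping: at the moment $p_\alpha$ is computed in step (a), the list $\mathbf{b}$ may have been augmented earlier in the same outer loop by other monomials of $\mathbf{n}$. Thanks to the definition of $\prec$ as the processing order, these are precisely the monomials $\beta \prec \alpha$, so the induction hypothesis legitimately applies to them and the argument goes through without any further work.
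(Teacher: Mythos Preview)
Your proposal is correct and follows essentially the same approach as the paper's proof: induction along the processing order $\prec$, using that $\operatorname{proj}$ (via Lemma~\ref{lem:proj}) gives $p_\alpha = \tmmathbf{x}^\alpha - \sum_{\beta \in \tmmathbf{b}_{\prec \alpha}} \lambda_\beta p_\beta$, and then applying the induction hypothesis to each $p_\beta$ to conclude that the correction lies in $B_{\prec \alpha}$. Your explicit treatment of the within-loop bookkeeping (that $\tmmathbf{b}$ may grow during a single outer loop, and that $\prec$ accounts for this) is in fact slightly more careful than the paper's version, which inducts on the loop index $l$ and leaves that point implicit.
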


\begin{proof}
  We prove by induction on the loop index $l$ that we have \ $p_{\alpha}
  =\tmmathbf{x}^{\alpha} + b_{\alpha}$ with $b_{\alpha} \in B_{\prec \alpha}$.
  
  The property is clearly true for $l = 0$, $\alpha =\tmmathbf{0}$ and
  $p_{\alpha} = 1 =\tmmathbf{x}^{\tmmathbf{0}}$. Suppose that it is true for
  any $l' < l$ and consider the $l^{\tmop{th}}$ loop of the algorithm. The
  polynomial $p_{\alpha}$ is constructed by projection of
  $\tmmathbf{x}^{\alpha}$ on $\langle p_{\alpha} \rangle_{\beta \in
  \tmmathbf{b}}$ orthogonally to $\langle m_{\beta} \rangle_{\beta \in
  \tmmathbf{b}}$ where $\tmmathbf{b}=\tmmathbf{b}_{\prec \alpha}$. By
  induction hypothesis, $p_{\beta} =\tmmathbf{x}^{\beta} + b_{\beta}$ with
  $b_{\beta} \in B_{\prec \beta} \subset B_{\prec \alpha}$. Then by Lemma
  \ref{lem:proj}, we have
  \[ p_{\alpha} =\tmmathbf{x}^{\alpha} + \sum_{\beta \prec \alpha}
     \lambda_{\beta} p_{\beta} \noplus =\tmmathbf{x}^{\alpha} \noplus +
     b_{\alpha} \]
  with $\lambda_{\beta} \in \mathbbm{K}$, $b_{\alpha} \in B_{\prec \alpha}$.
  Thus, the induction hypothesis is true for $l$, which concludes
  the proof.
\end{proof}

\subsection{Quotient algebra structure}

We show now that the algorithm outputs a border basis of an Artinian
Gorenstein algebra $\mathcal{A}_{\tilde{\sigma}}$ for an extension
$\tilde{\sigma}$ of $\sigma$, when all the border relations are
computed, that is, when $\tmmathbf{d}= \partial \tmmathbf{b}$.

\begin{theorem}
  \label{thm:flatextalgo}Let $\tmmathbf{b}= [\beta_1, \ldots, \beta_r]$,
  $\tmmathbf{c}= [\gamma_1, \ldots, \gamma_r]$, $\tmmathbf{p}= [p_{\beta_1},
  \ldots, p_{\beta_r}]$, $\tmmathbf{q}= [q_{\beta_1}, \ldots, q_{\beta_r}]$
  and $\tmmathbf{k}= [p_{\alpha_1}, \ldots, p_{\alpha_s}]$ be the output of
  Algorithm \ref{algo:1}. Let $V = \langle \tmmathbf{x}^{\tmmathbf{b}^+}
  \rangle$. If $\tmmathbf{d}= \partial \tmmathbf{b}$ and $\tmmathbf{c}^+
  \subset \tmmathbf{b}'$ connected to $1$ such that
  $\tmmathbf{x}^{\tmmathbf{b}^+} \cdot \tmmathbf{x}^{\tmmathbf{b}'}
  =\tmmathbf{x}^{\tmmathbf{a}}$ then $\sigma$ coincides on $\langle
  \tmmathbf{x}^{\tmmathbf{a}} \rangle$ with a series $\tilde{\sigma} \in
  \mathbbm{K} [[\tmmathbf{y}]]$ such that
  \begin{itemize}
    \item $\tmop{rank} H_{\tilde{\sigma}} = r$,
    
    \item $(\tmmathbf{p}, \tmmathbf{q})$ are pairwise orthogonal bases of
    $\mathcal{A}_{\tilde{\sigma}}$ for the inner product $\langle \cdummy,
    \cdummy \rangle_{\tilde{\sigma}}$,
    
    \item The family $\tmmathbf{k}= \{ p_{\alpha}, \alpha \in \partial
    \tmmathbf{b} \}$ is a border basis of the ideal $I_{\tilde{\sigma}}$, with
    respect to $\tmmathbf{x}^{\tmmathbf{b}}$.
    
    \item The matrix of multiplication by $x_k$ in the basis $\tmmathbf{p}$
    (resp. {\tmem{{\tmstrong{q}}}}) of $\mathcal{A}_{\tilde{\sigma}}$ is $M_k
    \assign (\langle \sigma | x_k p_{\beta_j} q_{\beta_i} \rangle)_{1
    \leqslant i, j \leqslant r} \nobracket$ (resp. $M_k^t$).
\end{itemize}
\end{theorem}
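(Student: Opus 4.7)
The strategy is to produce via Theorem~\ref{thm:flatext} a unique rank-$r$ linear form $\tilde{\sigma} \in \mathbbm{K}[[\tmmathbf{y}]]$ agreeing with $\sigma$ on $\langle \tmmathbf{x}^{\tmmathbf{a}}\rangle$, and then extract the four conclusions from the algorithmic invariants already gathered in Lemmas~\ref{lem:semiortho}, \ref{lem:ortho} and \ref{lem:leadingterm}. I apply Theorem~\ref{thm:flatext} with $V = \langle \tmmathbf{x}^{\tmmathbf{b}^+}\rangle$, $V' = \langle \tmmathbf{x}^{\tmmathbf{b}'}\rangle$, $B = \tmmathbf{x}^{\tmmathbf{b}}$, $B' = \tmmathbf{x}^{\tmmathbf{c}}$. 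The hypotheses are immediate: $\tmmathbf{b}$, and hence $\tmmathbf{b}^+$, is connected to $1$ because $\tmop{next}$ enlarges $\tmmathbf{b}$ only along $\partial \tmmathbf{b}$; $\tmmathbf{b}'$ is connected to $1$ by assumption; $1 \in \tmmathbf{b}$ yields $x_i = x_i \cdot 1 \in \tmmathbf{b}^+$; $B^+ = \tmmathbf{x}^{\tmmathbf{b}^+} \subset V$ is tautological and $B'^+ = \tmmathbf{x}^{\tmmathbf{c}^+} \subset V'$ holds by assumption; finally $\langle V \cdot V'\rangle \subset \langle \tmmathbf{x}^{\tmmathbf{b}^+ \cdot \tmmathbf{b}'}\rangle = \langle \tmmathbf{x}^{\tmmathbf{a}}\rangle$, so $\sigma$ is defined there.

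The main technical point is the rank equality $\tmop{rank} H_\sigma^{V, V'} = \tmop{rank} H_\sigma^{B, B'} = r$. By Lemma~\ref{lem:leadingterm} the change of basis between $\tmmathbf{x}^{\tmmathbf{b}}$ and $\tmmathbf{p}$ is triangular unipotent in the order $\prec$; Lemma~\ref{lem:semiortho} then makes the Gram matrix $(\langle p_{\beta_j}, \tmmathbf{x}^{\gamma_i}\rangle_\sigma)_{i,j}$ triangular with nonzero diagonal pivots (those being precisely the nonzero quantities selected in step (b) of the algorithm), so $H_\sigma^{B, B'}$ is invertible of rank $r$. For $H_\sigma^{V, V'}$, the assumption $\tmmathbf{d} = \partial \tmmathbf{b}$ together with Lemma~\ref{lem:leadingterm} provides for every $\alpha \in \tmmathbf{d}$ a polynomial $p_\alpha = \tmmathbf{x}^\alpha + b_\alpha$ with $b_\alpha \in \langle \tmmathbf{x}^{\tmmathbf{b}}\rangle$; for any $\gamma \in \tmmathbf{b}' \subset \tmmathbf{a}$, the product $\tmmathbf{x}^\gamma p_\alpha$ has support in $\tmmathbf{b}^+ \cdot \tmmathbf{b}' = \tmmathbf{a}$, so Lemma~\ref{lem:semiortho} forces $\langle p_\alpha, \tmmathbf{x}^\gamma\rangle_\sigma = 0$ and hence $p_\alpha \in \ker H_\sigma^{V, V'}$. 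Replacing the monomial basis of $V$ by $\tmmathbf{x}^{\tmmathbf{b}} \cup \{p_\alpha\}_{\alpha \in \tmmathbf{d}}$ collapses the image of $H_\sigma^{V, V'}$ to that of $H_\sigma^{B, V'}$, whose rank is bounded above by $|B| = r$ and below by $r$ from the invertible submatrix $H_\sigma^{B, B'}$. The rectangle condition $\tmmathbf{b}^+ \cdot \tmmathbf{b}' = \tmmathbf{a}$ is what powers this reduction; this is, to my mind, the main subtle point.

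Theorem~\ref{thm:flatext} now yields the unique extension $\tilde{\sigma}$ with $\tmop{rank} H_{\tilde{\sigma}} = r$ and $I_{\tilde{\sigma}} = (\ker H_\sigma^{B^+, B'})$. The orthogonality argument above shows $(\tmmathbf{k}) \subset I_{\tilde{\sigma}}$. Being a complete rewriting family for $\tmmathbf{x}^{\tmmathbf{b}}$ (by $\tmmathbf{d} = \partial \tmmathbf{b}$ and Lemma~\ref{lem:leadingterm}), $\tmmathbf{k}$ produces the projection $\pi_{\tmmathbf{k}}$ and hence $\mathbbm{K}[\tmmathbf{x}] = \langle \tmmathbf{x}^{\tmmathbf{b}}\rangle + (\tmmathbf{k})$; combined with linear independence of $\tmmathbf{x}^{\tmmathbf{b}}$ in $\mathcal{A}_{\tilde{\sigma}}$ (Lemma~\ref{lem:basis} applied to the invertible $H_{\tilde{\sigma}}^{B, B'} = H_\sigma^{B, B'}$) and $\dim \mathcal{A}_{\tilde{\sigma}} = r = |\tmmathbf{b}|$, this forces the direct sum $\mathbbm{K}[\tmmathbf{x}] = \langle \tmmathbf{x}^{\tmmathbf{b}}\rangle \oplus (\tmmathbf{k})$ and the equality $(\tmmathbf{k}) = I_{\tilde{\sigma}}$, proving the border basis claim. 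The $p_{\beta_j}$ have support in $\tmmathbf{b}$ while the $q_{\beta_i}$ have support in $\tmmathbf{c} \subset \tmmathbf{b}'$ (by their recursive definition through $\tmop{proj}$), so each product $p_{\beta_j} q_{\beta_i}$ has support in $\tmmathbf{b} \cdot \tmmathbf{b}' \subset \tmmathbf{a}$ and $\langle \tilde{\sigma} \mid p_{\beta_j} q_{\beta_i}\rangle = \langle \sigma \mid p_{\beta_j} q_{\beta_i}\rangle = \delta_{ij}$ by Lemma~\ref{lem:ortho}, whence $(\tmmathbf{p}, \tmmathbf{q})$ are pairwise orthogonal bases of $\mathcal{A}_{\tilde{\sigma}}$. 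Finally, writing $x_k p_{\beta_j} \equiv \sum_i (M_k)_{ij} p_{\beta_i} \pmod{I_{\tilde{\sigma}}}$ and pairing with $q_{\beta_i}$ yields $(M_k)_{ij} = \langle \tilde{\sigma} \mid x_k p_{\beta_j} q_{\beta_i}\rangle$, and this equals $\langle \sigma \mid x_k p_{\beta_j} q_{\beta_i}\rangle$ because $x_k p_{\beta_j}$ has support in $\tmmathbf{b}^+$ so that the triple product lies in $\langle \tmmathbf{x}^{\tmmathbf{a}}\rangle$; the transposed formula for the basis $\tmmathbf{q}$ follows from the symmetry of the bilinear form $\langle \cdot, \cdot\rangle_{\tilde{\sigma}}$.
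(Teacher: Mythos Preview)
Your proof is correct and follows essentially the same route as the paper: apply Theorem~\ref{thm:flatext} with $V=\langle\tmmathbf{x}^{\tmmathbf{b}^+}\rangle$, $V'=\langle\tmmathbf{x}^{\tmmathbf{b}'}\rangle$, exhibit the rank-$r$ triangular block via Lemmas~\ref{lem:semiortho} and~\ref{lem:leadingterm}, and then read off the four conclusions. The paper presents the rank computation as a block matrix $\left(\begin{smallmatrix}L_r&0\\ *&0\end{smallmatrix}\right)$ in the basis $\tmmathbf{x}^{\tmmathbf{b}}\cup\{p_\alpha\}_{\alpha\in\tmmathbf{d}}$ of $V$ against a basis of $V'$ starting with $m_{\beta_1},\ldots,m_{\beta_r}$, whereas you spell out the two inequalities separately; and for the border basis claim the paper reads off $I_{\tilde\sigma}=(\ker H_\sigma^{B^+,B'})=(p_\alpha)_{\alpha\in\tmmathbf{d}}$ directly from Theorem~\ref{thm:flatext}, while you argue via $(\tmmathbf{k})\subset I_{\tilde\sigma}$ plus dimension counting and the projection $\pi_{\tmmathbf{k}}$---both routes are equivalent and your version is somewhat more explicit about why the support conditions needed to invoke Lemma~\ref{lem:semiortho} and to pass from $\sigma$ to $\tilde\sigma$ are met.
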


\begin{proof}
  By construction, $\tmmathbf{x}^{\tmmathbf{b}^+}$ is connected to $1$. Let $V
  = \langle \tmmathbf{x}^{\tmmathbf{b}^+} \rangle$ and $V' = \langle
  \tmmathbf{x}^{\tmmathbf{b}'} \rangle$. As $\tmmathbf{b}^+ =\tmmathbf{b} \cup
  \tmmathbf{d}$, a basis of $V$ is formed by the monomials
  $\tmmathbf{x}^{\tmmathbf{b}}$ and the polynomials $p_{\alpha}
  =\tmmathbf{x}^{\alpha} + b_{\alpha}$ with $b_{\alpha} \in \langle
  \tmmathbf{x}^{\tmmathbf{b}} \rangle$ for $\alpha \in \tmmathbf{d}$. The
  matrix of $H_{\sigma}^{V, V'}$ in this basis of $V$ and a basis of
  $V',$ which first elements are $m_{\beta_1}, \ldots,
  m_{\beta_r}$, is of the form 
  \[ H_{\sigma}^{V, V'} = \left(\begin{array}{cc}
       L_r & 0\\
       \ast & 0
     \end{array}\right) \]
  where $L_r$ is a lower triangular invertible matrix of size $r$. The kernel
  of $H_{\sigma}^{V, V'}$ is generated by the polynomials $p_{\alpha}$ for
  $\alpha \in \tmmathbf{d}$.
  
  By Theorem \ref{thm:flatext}, $\sigma$ coincides on $V \cdummy V' =
  \langle \tmmathbf{x}^{\tmmathbf{a}} \rangle$ with a series $\tilde{\sigma}$
  such that $\tmmathbf{x}^{\tmmathbf{b}}$ is a basis of
  $\mathcal{A}_{\bar{\sigma}} =\mathbbm{K} [\tmmathbf{x}] /
  I_{\tilde{\sigma}}$ and $I_{\tilde{\sigma}} = (\ker H_{\tilde{\sigma}}^{V,
  V'}) = (p_{\alpha})_{\alpha \in \tmmathbf{d}}$.
  
  By Lemma \ref{lem:leadingterm}, $p_{\alpha} =\tmmathbf{x}^{\alpha} +
  b_{\alpha}$ with $\alpha \in \partial \tmmathbf{b}$ and $b_{\alpha} \in
  \langle \tmmathbf{x}^{\tmmathbf{b}} \rangle$. Thus $(p_{\alpha})_{\alpha \in
  \partial \tmmathbf{b}}$ is a border basis with respect to
  $\tmmathbf{x}^{\tmmathbf{b}}$ for the ideal $I_{\tilde{\sigma}}$, since 
  $\tmmathbf{x}^{\tmmathbf{b}}$ is a basis of of $\mathcal{A}_{\bar{\sigma}}$.
  This shows that $\tmop{rank} H_{\tilde{\sigma}} = \dim
  \mathcal{A}_{\tilde{\sigma}} = | \tmmathbf{b} | = r$.
  
  By Lemma \ref{lem:ortho}, $(\tmmathbf{p}, \tmmathbf{q})$ are pairwise
  orthogonal for the inner product $\langle \cdummy, \cdummy
  \rangle_{\sigma}$, which coincides with $\langle \cdummy, \cdummy
  \rangle_{\tilde{\sigma}}$ on $\langle \tmmathbf{x}^{\tmmathbf{a}} \rangle$.
  Thus they are pairwise orthogonal bases of $\mathcal{A}_{\tilde{\sigma}}$
  for the inner product $\langle \cdummy, \cdummy \rangle_{\tilde{\sigma}}$.
  
  As we have $x_k p_{\beta_j} \equiv \sum_{i = 1}^{r} \langle x_k
  p_{\beta_j} \nobracket, q_{\beta_i} \rangle_{\sigma} \nobracket
  p_{\beta_i}$, the matrix of multiplication by $x_k$ in the basis
  $\tmmathbf{p}$ of $\mathcal{A}_{\tilde{\sigma}}$ is $$M_k \assign (\langle
  x_k p_{\beta_j} \nobracket, q_{\beta_i} \rangle_{\sigma} \nobracket)_{1
  \leqslant i, j \leqslant r} = (\langle \sigma | x_k p_{\beta_j} q_{\beta_i}
  \rangle)_{1 \leqslant i, j \leqslant r}. $$ Exchanging the role of
  {\tmstrong{p}} and {\tmstrong{q}}, we obtain $M_k^t$ for the matrix of
  multiplication by $x_k$ in the basis {\tmem{{\tmstrong{q}}}}.\tmtextbf{}
\end{proof}

\begin{lemma}
  If $\prec$ is a monomial ordering and if at the end of the algorithm
  $\tmmathbf{d}= \partial \tmmathbf{b}$ and $\tmmathbf{c}^+ \subset
  \tmmathbf{b}'$ connected to $1$ with $\tmmathbf{x}^{\tmmathbf{b}^+} \cdot
  \tmmathbf{x}^{\tmmathbf{b}'} =\tmmathbf{x}^{\tmmathbf{a}}$, then
  $\tmmathbf{b}=\tmmathbf{c}$ and $\tmmathbf{k}$ is a Gr{\"o}bner basis of the
  ideal $I_{\sigma}$ for the monomial ordering.
\end{lemma}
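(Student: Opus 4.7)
The Gröbner basis claim follows almost directly from earlier results. By Lemma \ref{lem:leadingterm}, each border relation $p_\alpha\in\tmmathbf{k}$ with $\alpha\in\tmmathbf{d}=\partial\tmmathbf{b}$ is of the form $\tmmathbf{x}^\alpha+b_\alpha$ with $b_\alpha$ supported on monomials strictly $\prec$-smaller than $\tmmathbf{x}^\alpha$, so $\tmmathbf{x}^\alpha$ is the $\prec$-leading monomial of $p_\alpha$. Combined with Theorem \ref{thm:flatextalgo} (which gives the border-basis structure of $\tmmathbf{k}$ for $I_{\tilde\sigma}=I_\sigma$) and the remark recorded in the paper that a border basis whose border leading terms coincide with the $\prec$-leading terms is a Gröbner basis, we conclude that $\tmmathbf{k}$ is a Gröbner basis of $I_\sigma$ for $\prec$.

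For the equality $\tmmathbf{b}=\tmmathbf{c}$, the plan is to prove $\gamma_i=\beta_i$ by induction on $i$. Under the inductive hypothesis $\gamma_j=\beta_j$ for $j<i$, each $m_{\beta_j}$ is a nonzero scalar multiple of $\tmmathbf{x}^{\beta_j}$, indexed in $\tmmathbf{b}_{\prec\beta_i}$. Since monomials are processed in $\prec$-order and $\prec$ is a monomial ordering, the algorithm selects $\gamma_i$ as the $\prec$-smallest $\gamma\in\tmmathbf{t}$ satisfying $\tmmathbf{x}^\gamma p_{\beta_i}\in\langle\tmmathbf{x}^{\tmmathbf{a}}\rangle$ and $\langle p_{\beta_i},\tmmathbf{x}^\gamma\rangle_\sigma\neq 0$. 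First I observe that $\beta_i\in\tmmathbf{t}$, since $\beta_i\neq\gamma_j=\beta_j$ for $j<i$ and the later $\gamma$'s are not yet assigned.

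The heart of the argument is to rule out every $\gamma'\prec\beta_i$ lying in $\tmmathbf{t}$. Any such $\gamma'$ must lie in $\tmmathbf{d}$: it was processed before $\beta_i$, and if it were in $\tmmathbf{b}$ it would equal some $\beta_j=\gamma_j$ ($j<i$), contradicting $\gamma'\in\tmmathbf{t}$. Writing $p_{\gamma'}=\tmmathbf{x}^{\gamma'}+b_{\gamma'}\in I_{\tilde\sigma}$ via Theorem \ref{thm:flatextalgo}, I decompose
\[
\langle p_{\beta_i},\tmmathbf{x}^{\gamma'}\rangle_{\tilde\sigma}
=\langle p_{\beta_i},p_{\gamma'}\rangle_{\tilde\sigma}-\langle p_{\beta_i},b_{\gamma'}\rangle_{\tilde\sigma}.
\]
The first term vanishes because $p_{\gamma'}\in I_{\tilde\sigma}$ and $\tilde\sigma\in I_{\tilde\sigma}^\bot$. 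The second vanishes by Lemma \ref{lem:semiortho} applied term-by-term to each $\tmmathbf{x}^{\beta_j}$ in the support of $b_{\gamma'}$ (with $\beta_j\prec\gamma'\prec\beta_i$), after substituting $\tmmathbf{x}^{\beta_j}=m_{\beta_j}/c_j$ from the induction hypothesis. The hypothesis $\tmmathbf{x}^{\tmmathbf{b}^+}\cdot\tmmathbf{x}^{\tmmathbf{b}'}=\tmmathbf{x}^{\tmmathbf{a}}$ together with $\tmmathbf{c}^+\subset\tmmathbf{b}'$ is what guarantees that $\sigma$ and $\tilde\sigma$ coincide on the relevant products, since it yields $\tmmathbf{b}\cdot\tmmathbf{c}\subseteq\tmmathbf{b}^+\cdot\tmmathbf{b}'=\tmmathbf{a}$.

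Having ruled out every smaller candidate, it remains to check that $\gamma=\beta_i$ itself meets both acceptance conditions, so that the algorithm indeed picks it. The support condition follows from the same inclusion $\tmmathbf{x}^{\tmmathbf{b}^+}\cdot\tmmathbf{x}^{\tmmathbf{b}'}=\tmmathbf{x}^{\tmmathbf{a}}$, and the nonvanishing $\langle p_{\beta_i},\tmmathbf{x}^{\beta_i}\rangle_\sigma\neq 0$ comes from the fact that $\beta_i\in\tmmathbf{b}$ forces the algorithm to have found some valid $\gamma_i\succeq\beta_i$ satisfying both conditions; combined with the normalization $\langle p_{\beta_i},m_{\beta_i}\rangle_\sigma=1$ and the triangularity of $\langle p_{\beta_k},m_{\beta_l}\rangle_\sigma$ from Lemma \ref{lem:semiortho}, this pins down $\gamma_i=\beta_i$. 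The main obstacle I anticipate is the careful bookkeeping of which products lie in $\langle\tmmathbf{x}^{\tmmathbf{a}}\rangle$, in particular verifying that $2\beta_i\in\tmmathbf{a}$ so the pairing $\langle p_{\beta_i},\tmmathbf{x}^{\beta_i}\rangle_\sigma$ is actually computable from the known data; once this support accounting is in place, the induction closes and both claims follow.
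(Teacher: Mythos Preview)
Your treatment of the Gr\"obner-basis claim matches the paper's. The difficulty is in the argument for $\tmmathbf{b}=\tmmathbf{c}$, where your inductive scheme has two real gaps.

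First, the dichotomy ``$\gamma'\prec\beta_i$ and $\gamma'\in\tmmathbf{t}$ implies $\gamma'\in\tmmathbf{d}$'' is not valid. The partition of $\tmmathbf{a}$ is $\tmmathbf{b}\cup\tmmathbf{d}\cup\tmmathbf{s}$, and monomials in $\tmmathbf{s}$ at termination were \emph{never} processed; the order $\prec$ is only the processing order on the treated monomials, extended to a monomial ordering, so $\gamma'\prec\beta_i$ does not force $\gamma'$ to have been treated. A $\gamma'\in\tmmathbf{s}$ carries no polynomial $p_{\gamma'}$, and your decomposition $\tmmathbf{x}^{\gamma'}=p_{\gamma'}-b_{\gamma'}$ is unavailable. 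To handle such $\gamma'$ you would have to factor $\tmmathbf{x}^{\gamma'}=\tmmathbf{x}^{\delta}\tmmathbf{x}^{\gamma''}$ with $\delta\in\tmmathbf{d}$ (possible since $\gamma'\notin\tmmathbf{b}$ lies in the initial ideal), which is exactly the paper's mechanism.

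Second, the closing step---showing the algorithm actually selects $\gamma_i=\beta_i$---is not completed, and you correctly flag the obstacle $2\beta_i\in\tmmathbf{a}$. This support condition is \emph{not} derivable from the hypotheses before you know $\beta_i\in\tmmathbf{c}$, so the argument is circular as written: you need $\beta_i\in\tmmathbf{c}$ to place $\beta_i\in\tmmathbf{b}'$ and hence $2\beta_i\in\tmmathbf{a}$, but that is what you are trying to prove. Likewise the nonvanishing $\langle p_{\beta_i},\tmmathbf{x}^{\beta_i}\rangle_\sigma\neq 0$ cannot be read off $\langle p_{\beta_i},m_{\beta_i}\rangle_\sigma=1$ until you already know $m_{\beta_i}$ is supported at $\beta_i$.

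The paper sidesteps both issues by proving only the inclusion $\tmmathbf{c}\subset\tmmathbf{b}$ (and concluding by cardinality). Given $\gamma\in\tmmathbf{c}$ with $\gamma\notin\tmmathbf{b}$, it writes $\gamma=\delta+\gamma'$ with $\delta\in\tmmathbf{d}$, splits $\langle p_\beta,\tmmathbf{x}^\gamma\rangle_\sigma$ via $p_\delta=\tmmathbf{x}^\delta+b_\delta$, kills the $p_\delta$-term using Lemma~\ref{lem:semiortho} for $\delta\in\tmmathbf{d}$, and kills the $b_\delta\tmmathbf{x}^{\gamma'}$-term by the \emph{minimality} of $\gamma$ (all smaller monomials already give zero pairing). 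This argument never needs $2\beta_i\in\tmmathbf{a}$, treats $\gamma\in\tmmathbf{d}$ and $\gamma\in\tmmathbf{s}$ uniformly, and requires no external induction on $i$.
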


\begin{proof}
  If $\prec$ is a monomial ordering, then the polynomials $p_{\alpha}
  =\tmmathbf{x}^{\alpha} + b_{\alpha}$, $\alpha \in \partial \tmmathbf{b}$ are
  constructed in such a way that their leading term is
  $\tmmathbf{x}^{\alpha}$. Therefore the border basis $\tmmathbf{k}=
  (p_{\alpha})_{\alpha \in \partial \tmmathbf{b}}$ is also a Gr{\"o}bner
  basis.
  
  By construction, $\tmmathbf{c}$ is the set of monomials $\gamma \in
  \tmmathbf{a}$ such that $\langle p_{\beta}, \tmmathbf{x}^{\gamma}
  \rangle_{\sigma} \neq 0$ for some $\beta \in \tmmathbf{b}$. Suppose that
  $\gamma \in \tmmathbf{c}$ is not in $\tmmathbf{b}$. Then
  $\tmmathbf{x}^{\gamma} \in (\tmmathbf{x}^{\tmmathbf{d}})$ and there is
  $\delta \in \tmmathbf{d}$ and $\gamma' \in \tmmathbf{a}$ such that $\gamma =
  \delta + \gamma'$. As $p_{\delta} \in \tmmathbf{k}$, we have $\langle
  p_{\delta}, \tmmathbf{x}^{\alpha} \rangle_{\sigma} = 0$ for $\alpha \in
  \tmmathbf{a}$ such that $p_{\delta} \tmmathbf{x}^{\alpha} \in \langle
  \tmmathbf{x}^{\tmmathbf{a}} \rangle$. By Lemma \ref{lem:leadingterm},
  $p_{\delta} =\tmmathbf{x}^{\delta} + b_{\delta}$ with
  $b_{\delta} \in \tmmathbf{b}_{\prec \delta}$ with $\tmmathbf{x}^{\delta}
  \succ b_{\delta}$. \
  \[ \langle p_{\beta}, \tmmathbf{x}^{\gamma} \rangle_{\sigma} = \langle
     p_{\beta}, \tmmathbf{x}^{\delta} \tmmathbf{x}^{\gamma'} \rangle_{\sigma}
     = \langle p_{\beta}, p_{\delta} \tmmathbf{x}^{\gamma'} \rangle_{\sigma}
     \noplus \noplus - \langle p_{\beta}, b_{\delta} \tmmathbf{x}^{\gamma'}
     \rangle_{\sigma} . \]
  We have $\langle p_{\beta}, p_{\delta} \tmmathbf{x}^{\gamma'}
  \rangle_{\sigma} \noplus \noplus = \langle p_{\delta}, p_{\beta}
  \tmmathbf{x}^{\gamma'} \rangle_{\sigma} \noplus \noplus = 0$ since
  $p_{\delta} \in \tmmathbf{k}$ and $p_{\delta} p_{\beta}
  \tmmathbf{x}^{\gamma'} \in \langle \tmmathbf{x}^{\tmmathbf{a}} \rangle$. As
  $\gamma$ is the first monomial of $\tmmathbf{a}$ such that $\langle
  p_{\beta}, \tmmathbf{x}^{\gamma} \rangle_{\sigma} \neq 0$ and $b_{\delta}
  \tmmathbf{x}^{\gamma'} \prec \tmmathbf{x}^{\delta + \gamma'}
  =\tmmathbf{x}^{\gamma}$, we have $\langle p_{\beta}, b_{\delta}
  \tmmathbf{x}^{\gamma'} \rangle_{\sigma}$, which implies that $\langle
  p_{\beta}, \tmmathbf{x}^{\gamma} \rangle_{\sigma} = 0$. This is in
  contradiction with the hypothesis $\langle p_{\beta}, \tmmathbf{x}^{\gamma}
  \rangle_{\sigma} \neq 0$, therefore $\gamma \in \tmmathbf{b}$. We deduce
  that \tmtextbf{}$\tmmathbf{c} \subset \tmmathbf{b}$ and the equality holds
  since the two sets have the same cardinality.
\end{proof}

Notice that to construct a minimal reduced Gr{\"o}bner basis of
$I_{\tilde{\sigma}}$ for the monomial ordering $\prec$, it suffices to
keep the elements $p_{\alpha} \in \tmmathbf{k}$ with $\alpha$ minimal
for the component-wise partial ordering.

\subsection{Complexity}

Let $s = | \tmmathbf{a} |$ and $r = | \tmmathbf{b} |$, $\delta = | \partial
\tmmathbf{b} |$. As $\tmmathbf{b} \subset \tmmathbf{a}$ and the monomials in
$\partial \tmmathbf{b}$ are the product of a monomial in $\tmmathbf{b}$ by one
of the variables $x_1, \ldots, x_n$, we have $r \leqslant s$ and $\delta
\leqslant n r$.

\begin{proposition}
  \label{prop:complexity}The complexity of the algorithm to compute the bases
  $\tmmathbf{p}$ and $\tmmathbf{q}$ is $\mathcal{O} ((r + \delta) r s)$.
\end{proposition}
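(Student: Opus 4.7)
The plan is to bound the total arithmetic cost by counting outer iterations and then the cost per iteration. Each $\alpha \in \tmmathbf{a}$ enters $\tmmathbf{n}$ at most once, is then moved from $\tmmathbf{s}$ into either $\tmmathbf{b}$ or $\tmmathbf{d}$, and is never revisited, so the total number of iterations equals $|\tmmathbf{b}| + |\tmmathbf{d}|$. Since every $\alpha \in \tmmathbf{d}$ was selected from $\tmmathbf{n} \subset \partial \tmmathbf{b}$, and $\tmmathbf{b}$ only grows while $\tmmathbf{d} \cap \tmmathbf{b} = \emptyset$, the final $\tmmathbf{d}$ still satisfies $\tmmathbf{d} \subset \partial \tmmathbf{b}$, so $|\tmmathbf{d}| \leq \delta$. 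This gives at most $r + \delta$ iterations.

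Next, I would bound the per-iteration cost by analysing three pieces of work. \textbf{(a)} The projection $p_\alpha \assign \tmop{proj}(\tmmathbf{x}^\alpha, [p_\beta], [m_\beta])$ runs at most $r$ loops. By Lemma~\ref{lem:leadingterm} each $p_\beta$ has support in $\tmmathbf{b}$, so the running polynomial $g$ has support in $\tmmathbf{b} \cup \{\alpha\}$ of size at most $r+1$. Because each $m_i$ is a scalar multiple of a single monomial, each inner product $\langle g, m_i\rangle_\sigma$ reduces to $\mathcal{O}(r)$ products $g_\beta \sigma_{\beta+\gamma_i}$, and each subtraction $g \minusassign \lambda_i p_i$ is $\mathcal{O}(r)$, giving $\mathcal{O}(r^2)$ for $p_\alpha$. \textbf{(b)} The search for the first $\gamma \in \tmmathbf{t}$ with $\tmmathbf{x}^\gamma p_\alpha \in \langle \tmmathbf{x}^{\tmmathbf{a}}\rangle$ and $\langle p_\alpha, \tmmathbf{x}^\gamma\rangle_\sigma \neq 0$ tries at most $|\tmmathbf{t}| \leq s$ candidates; since $p_\alpha$ has support of size at most $r+1$, each candidate costs $\mathcal{O}(r)$, for $\mathcal{O}(rs)$ total. \textbf{(c)} The optional projection $q_\alpha \assign \tmop{proj}(m_\alpha, [q_\beta], [p_\beta])$ appears $\mathcal{O}(r^3)$ naively, since the $p_i$ are not monomials; but Lemma~\ref{lem:ortho} guarantees pairwise orthogonality of the already computed $[p_\beta]_{\beta \prec \alpha}$ and $[q_\beta]_{\beta \prec \alpha}$, from which the invariant $\langle g, p_i\rangle_\sigma = \langle m_\alpha, p_i\rangle_\sigma$ follows by a short induction over the loop. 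Each coefficient can then be read off from the single monomial $m_\alpha$ in $\mathcal{O}(r)$, and since each $q_\beta$ has support in $\tmmathbf{c}$ of size at most $r$, updating $q_\alpha$ is $\mathcal{O}(r^2)$ in total.

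Summing, the per-iteration cost is $\mathcal{O}(r^2 + rs + r^2) = \mathcal{O}(rs)$ using $r \leq s$; the bookkeeping and the call to $\tmop{next}(\tmmathbf{b}, \tmmathbf{d}, \tmmathbf{c}, \tmmathbf{s})$ contribute $\mathcal{O}(rs)$ per outer loop and are absorbed. Multiplying by the at most $r + \delta$ iterations yields $\mathcal{O}((r + \delta)\, r s)$, as claimed. The main delicate point is the support-size bookkeeping that keeps inner products at $\mathcal{O}(r)$ rather than $\mathcal{O}(s)$ or $\mathcal{O}(r^2)$, which hinges on Lemmas~\ref{lem:leadingterm} and~\ref{lem:ortho} and on constant-time random access to the stored coefficients $\sigma_\beta$.
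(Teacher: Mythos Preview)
Your proof is correct and follows the same approach as the paper: bound the number of iterations by $|\tmmathbf{b}^+| = r+\delta$ and the per-iteration cost by $\mathcal{O}(rs)$. Your treatment of step (c) is actually more careful than the paper's, which asserts $\mathcal{O}(r^2)$ for $q_\alpha$ by analogy with $p_\alpha$ without spelling out that the pairwise orthogonality of Lemma~\ref{lem:ortho} is what reduces each inner product $\langle g, p_i\rangle_\sigma$ from a polynomial--polynomial pairing to the monomial--polynomial pairing $\langle m_\alpha, p_i\rangle_\sigma$.
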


\begin{proof}
  At each step, the computation of $p_{\alpha}$ (resp. $q_{\alpha}$) requires
  $\mathcal{O} (r^2)$ arithmetic operations, since the support of the
  polynomials $p_{\beta}$, $q_{\beta}$ $(\beta \in \tmmathbf{b})$ is in
  $\tmmathbf{b}$ and $| \tmmathbf{b} | \leqslant r$. Computing $\langle
  \tmmathbf{x}^{\gamma}, p_{\alpha} \rangle_{\sigma}$ for all $\gamma \in
  \tmmathbf{t}$ requires $\mathcal{O} (r s)$ arithmetic operations. As the
  number of polynomials $p_{\alpha}$ is at most $| \tmmathbf{b}^+ | = r +
  \delta$, the total cost for computing $\tmmathbf{p}$ and $\tmmathbf{q}$ is
  thus in $\mathcal{O} ((r + \delta) (r^2 + r s)) =\mathcal{O} ((r + \delta)
  \noplus r s)$.
\end{proof}

As $\delta \leqslant n r$, the complexity of this algorithm is in $\mathcal{O}
(n r^2 s)$.

The algorithm is connected to the {\tmem{Berlekamp-Massey-\-Sakata}} algorithm,
which computes a Gr{\"o}bner basis for a monomial ordering $\prec$. In the BMS
algorithm, a minimal set $\mathcal{F}$ of recurrence polynomials valid for the
monomials smaller that a given monomial $m$ is computed. A monomial basis
$\tmmathbf{b}^{\ast}$ generated by all the divisors of some corner elements is
constructed. The successor $m^+$ of the monomial $m$ for the monomial ordering
$\prec$ is considered and the family $\mathcal{F}$ of valid recurrence
polynomials is updated by computing their discrepancy at the monomial $m^+$
and by cancelling this discrepancy, if necessary, by combination with one
lower polynomial {\cite{saints_algebraic-geometric_1995}}.

Let $\delta$ be the size of the border $\partial \tmmathbf{b}^{\ast}$ of the
monomial basis $\tmmathbf{b}^{\ast}$computed by BMS algorithm. At each update,
there are at most $\delta$ polynomials in $\mathcal{F}$. Let $s'$ be the
maximum number of their non-zero terms. Then the update of $\mathcal{F}$
requires $\mathcal{O} (\delta s')$ arithmetic operations. The number of
updates is bounded by the number $r + \delta$ of monomials in
$\tmmathbf{b}^+$. Checking the discrepency of a polynomial in $\mathcal{F}$
for all the monomials in $\tmmathbf{x}^{\tmmathbf{a}}$ requires $\mathcal{O}
(s' s)$ arithmetic operations. Thus, the total cost of the BMS algorithm is in
$\mathcal{O} ((r + \delta) \delta s' + \delta s' s)$. As the output
polynomials in the Gr{\"o}bner basis are not necessarily reduced, the maximal
number of terms $s' \leqslant s$ can be of the same order than $s$. Thus the
complete complexity of BMS algorithm is in $\mathcal{O} (\delta s^2)
=\mathcal{O} (n r s^2)$, which is an order of magnitude larger than
the bound of Proposition \ref{prop:complexity}, assuming that $r\ll s$.

The method presented in {\cite{berthomieu_linear_2015}} for computing
a Gr{\"o}bner basis of the recurrence polynomials computes the rank of
a Hankel matrix of size the number $\tilde{s}$ of monomials of degree
$\leqslant d$ for a bound $d$ on the degree of the recurrence
relations. It deduces a monomial basis $\tmmathbf{b}$ stable by division
and obtains the valid recurrence relations for the border monomials by
solving a linear Hankel system of size $r$. Thus the complexity is in
$\mathcal{O} (\delta r^{\omega} + \tilde{s}^{\omega})$ where
$2.3 \leqslant \omega \leqslant 3$. It is also larger than the bound
of Proposition \ref{prop:complexity}.  This bound could be improved by
exploiting the rank displacement of the structured matrices involved
in this method \cite{bostan_solving_2008}, but the known bounds on the
displacement rank of the matrices involved in the computation do not
improve the bound of Proposition \ref{prop:complexity}.

\section{Examples \nopunct}

\subsection{Multivariate Prony method}

Given a function $h (u_1, \ldots, u_n) = \sum_{i = 1}^r \omega_i e^{\zeta_{i,
1} u_1 + \cdots + \zeta_{i, n} u_n}$, the problem is to compute its
decomposition as a weighted sum of exponentials, from values of $h$. The
method proposed by G. Riche de Prony for sums of univariate exponential
functions consists in sampling the function at regularly spaced values
{\cite{baron_de_prony_essai_1795}}. In the multivariate extension of this
method, the function is sampled on a grid in $\mathbbm{R}^n$, for
instance $\mathbbm{N}^n$. The decomposition is computed from a subset of the
multi-index sequence of evaluation $\sigma_{\alpha} = h (\alpha_1, \ldots,
\alpha_n)$ for $\alpha = (\alpha_1, \ldots, \alpha_n) \in \mathbbm{N}^n$. The
ideal $I_{\sigma}$ associated to this sequence is the ideal defining the
points $\xi_i = (e^{\zeta_{i, 1}}, \ldots, e^{\zeta_{i, 1 n}})$. To compute
this decomposition, we apply the border basis algorithm to the sequence
$\sigma_{\alpha}$ for $| \alpha | \leqslant d$ with $d$ high enough, and
obtain a border basis of the ideal $I_{\sigma}$ defining the points $\xi_1,
\ldots, \xi_r \in \mathbbm{K}^n$, a basis of $\mathcal{A}_{\sigma}$ and the
tables of multiplication in this basis. By applying the decomposition
algorithm in {\cite{mourrain_polynomial-exponential_2016}}, we deduce the
points $\xi_i = (e^{\zeta_{i, 1}}, \ldots, e^{\zeta_{i, 1 n}})$. Taking the
log of their coordinates  $\log(\xi_{i,j})=\zeta_{i,j}$ yields the coordinates of the frequencies $\zeta_i$.

\subsection{Fast decoding of algebraic-geometric codes}

Let $\mathbbm{K}$ be a finite field. We consider an algebraic-geometric code
$C$ obtained by evaluation of polynomials in $\mathbbm{K} [x_1, \ldots, x_n]
\overset{}{}$ of degree $\leqslant d$ at points $\xi_1, \ldots, \xi_l \in
\mathbbm{K}^n$. It is a finite vector space in $\mathbbm{K}^l$. We use the
words of the orthogonal code $C^{\perp} = \{ (m_1, \ldots, m_l) \mid m
\nosymbol \cdot c = m_1 c_1 + \cdots + m_l c_l = 0 \}$ for the transmission of
information. Suppose that an error $\omega = (\omega_1, \ldots, \omega_l)$
occurs in the transmission of a message $m = (m_1, \ldots, m_l)$ so that the
message $m^{\ast} = m + \omega$ is received. Let $\omega_{i_1}, \ldots,
\omega_{i_r}$ be the non-zero coefficients of the error vector $\omega$. To
correct the message $r$, we use the moments or syndromes $\sigma_{\alpha} =
(\xi_1^{\alpha}, \ldots, \xi_l^{\alpha}) \cdot m^{\ast} = (\xi_1^{\alpha},
\ldots, \xi_l^{\alpha}) \cdot \omega = \sum_{j = 1}^r w_{i_j}
\xi_{i_j}^{\alpha}$ for $\alpha = (\alpha_1, \ldots, \alpha_n) \in
\mathbbm{N}^n$ with $| \alpha | \leqslant d$. We compute generators of the set
of error-locator polynomials, that is, the polynomials vanishing at the points
$\xi_{i_1}, \ldots, \xi_{i_r}$ and deduce the weights or errors $\omega_{i_j}$
by solving the Vandermonde system
\[ [\xi_{i_j}^{\alpha}]_{| \alpha | \leqslant d, 1 \leqslant j \leqslant r}
   (\omega_{i_j}) = (\sigma_{\alpha})_{| \alpha | \leqslant d} . \]
The points $\xi_{i_j}$ correspond to the position of the errors and
$\omega_{i_j}$ to their amplitude. By applying the border basis algorithm, we
obtain a border basis of \ the ideal of error-locator polynomials, from which
we deduce the position and amplitude of the errors.

\subsection{Sparse interpolation}

Given a sparse polynomial $h (u_1, \ldots, u_n) = \sum_{i = 1}^r \omega_i
u_1^{\gamma_{i, 1}} $ $\cdots$ $ u_n^{\gamma_{i, n}}$, which is a weighted sum of
$r$ monomials with non-zero weights $\omega_i \in \mathbbm{K}$, the problem is
to compute the exponents $(\gamma_{i, 1}, \ldots, \gamma_{i, n}) \in
\mathbbm{N}^n$ of the monomials and the weights $\omega_i$, from evaluations
of the blackbox functions $h$. \ The approach, proposed initially in
{\cite{ben-or_deterministic_1988}}, {\cite{zippel_interpolating_1990}},
consists in evaluating the function at points of the form $(\zeta_1^k, \ldots,
\zeta_n^k)$ for some values of $\zeta_1, \ldots \zeta_n \in \mathbbm{K}$ and
to apply univariate Prony-type methods or Berlekamp-Massey algorithms to the
sequence $\sigma_k = h (\zeta_1^k, \ldots, \zeta_n^k)$, for $k \in
\mathbbm{N}$. The approach can be extended to multi-index sequences
$(\sigma_{\alpha})_{\alpha \in \mathbbm{N}^n}$ by computing the terms
\[ \sigma_{\alpha} = h (\zeta_1^{\alpha_1}, \ldots, \zeta_n^{\alpha_n}) =
   \sum_{i = 1}^r \omega_i  (\zeta_1^{\gamma_{i, 1}})^{\alpha_1} \cdots
   (\zeta_n^{\gamma_{i, n}})^{\alpha_n} \]
for $\alpha = (\alpha_1, \ldots, \alpha_n) \in \mathbbm{N}^n$. \ It can also
be extended to sequences constructed from polylog functions
{\cite{mourrain_polynomial-exponential_2016}}. By applying the border basis
algorithm to the multi-index sequence $\sigma_{\alpha} = h
(\zeta_1^{\alpha_1}, \ldots, \zeta_n^{\alpha_n})$ for $| \alpha | \leqslant d$
with $d \in \mathbbm{N}$ high enough, we obtain generators of the ideal
$I_{\sigma}$ defining the points $\xi_i = (\zeta_1^{\gamma_{i, 1}}, \ldots,
\zeta_n^{\gamma_{i, n}})$ and deduce the weights $\omega_i$, $i = 1, \ldots,
r$. By computing the log of the coordinates of the points
$\xi_i$, we deduce the exponent vectors $\gamma_i = (\gamma_{i, 1}, \ldots,
\gamma_{i, n}) \in \mathbbm{N}^n$ for $i = 1, \ldots, r$.

\subsection{Tensor decomposition}

Given a homogeneous polynomial
\[ t = \sum_{\alpha_0 + \alpha_1 + \cdots + \alpha_n = d} t_{\alpha} 
   \binom{d}{\alpha} x_0^{\alpha_0} x_1^{\alpha_1} \ldots x_n^{\alpha_n} \]
of degree $d \in \mathbbm{N}$ with $t_{\alpha} \in \mathbbm{K}$,
$\binom{d}{\alpha} = \frac{d!}{\alpha_0 ! \cdots \alpha_n !}$, we want to a
decomposition of $t$ as sum of powers of linear forms:
\begin{equation}
  t = \sum_{i = 1}^r \omega_i (\xi_{i, 0} x_0 + \xi_{i, 1} x_1 + \cdots +
  \xi_{i, n} x_n)^d \label{eq:tensor}
\end{equation}
with a minimal $r$, $\omega_i \neq 0$ and $(\xi_{i, 0}, \ldots, \xi_{i, n})
\neq 0$. By a change of variables, we can assume that $\xi_{i, 0} \neq 0$ in
such a decomposition, and by dividing each linear form by $\xi_{i, 0}$ and
multiplying $\omega_i$ by $\xi_{i, 0}^d$, we can even assume that $\xi_{i, 0}
= 1$. Then by expansion of the powers of the linear forms and by
identification of the coefficients, we obtain \
\[ \sigma_{\alpha} : = t_{(d - \alpha_1 \cdots - \alpha_n, \alpha_1, \ldots,
   \alpha_n)} = \sum_{i = 1}^r \omega_i \xi_{i, 1}^{\alpha_1} \cdots \xi_{i,
   n}^{\alpha_n} = \sum_{i = 1}^r \omega_i \xi_i^{\alpha} \]
for $\alpha = (\alpha_1, \ldots, \alpha_n) \in \mathbbm{N}^n$ with $| \alpha |
\leqslant d$. We apply the border basis algorithm to this sequence, in order
to obtain generators of the ideal $I_{\sigma}$ defining the points $\xi_1,
\ldots, \xi_r \in \mathbbm{K}^n$ and providing the weights $\omega_i$. If the
number of terms $r$ is small enough compared to the number of terms
$\sigma_{\alpha}$, then the set of border relations are complete and it is
possible to compute the decomposition (\ref{eq:tensor}).

\subsection{Vanishing ideal of points}

Given a set of points $\Xi = \{ \xi_1, \ldots, \xi_r \} \subset
\mathbbm{K}^n$, we want to compute polynomials defining these points, that is,
a set of generators of the ideal of polynomials vanishing on $\Xi$. For that
purpose, we choose non-zero weights $w_i \in \mathbbm{K}$, a degree $d \in
\mathbbm{N}$ and we compute the sequence of moments{\tmabbr{}}
$\sigma_{\alpha} = \sum_{i = 1}^r \omega_i \xi^{\alpha}$ for $| \alpha |
\leqslant d$. The generating series $\sigma$ associated to these moments
define an Artinian Gorenstein algebra $\mathcal{A}_{\sigma} =\mathbbm{K}
[\tmmathbf{x}] / I_{\sigma}$, where $I_{\sigma}$ is the ideal of polynomials
vanishing $\Xi$ {\cite{mourrain_polynomial-exponential_2016}}. This ideal
$I_{\sigma}$ defines the points $\xi_i$ \ with multiplicity $1$. The
idempotents $\{ \tmmathbf{u}_i \}_{i = 1 \ldots r}$ associated to the points
$\Xi$ form a family of interpolation polynomials at these points:
$\tmmathbf{u}_i (\xi_j) = 0$ if $i \neq j$ and $\tmmathbf{u}_i (\xi_i) = 1$.
They are the common eigenvectors of the multiplication operators in
$\mathcal{A}_{\sigma}$. By applying the border basis algorithm to the sequence
$\sigma_{\alpha}$ for $| \alpha | \leqslant d$ with $d$ high enough, we obtain
generators of the ideal $I_{\sigma}$ defining the points $\xi_1, \ldots, \xi_r
\in \mathbbm{K}^n$, a basis of $\mathcal{A}_{\sigma}$ and the tables of
multiplication in this basis. By computing the eigenvectors of a generic
combination of the multiplication tables by a variable, we obtain a family of
interpolation polynomials at the roots $\Xi$.

\subsection{Benchmarks}

We present some experimentations of an implementation  of Algorithm
\ref{algo:1}\footnote{available at {\url{https://gitlab.inria.fr/mourrain/PolyExp}}}  in
the programming language  \textsc{Julia}\footnote{\url{https://julialang.org/}}. The arithmetic
operations are done in the finite field $\mathbbm{Z}/ 32003\mathbbm{Z}$. We
choose $r$ random points $\xi_i$ with $n$ coordinates in $\mathbbm{Z}/
32003\mathbbm{Z}$, take the sequence of moments $\sigma_{\alpha} = \sum_{i =
1}^r \xi_i^{\alpha}$ up for $| \alpha | \leqslant d$ \ with weights equal to
$1$. Figure  4.6 shows the timing (in sec.) to compute the border basis, checking
the validity of the recurrence relations up to degree $d$. The computation is
done on a MacOS El Capitan, 2.8 GHz Intel Core i7, 16 Go.
\begin{center}
{\includegraphics[height=4.5cm]{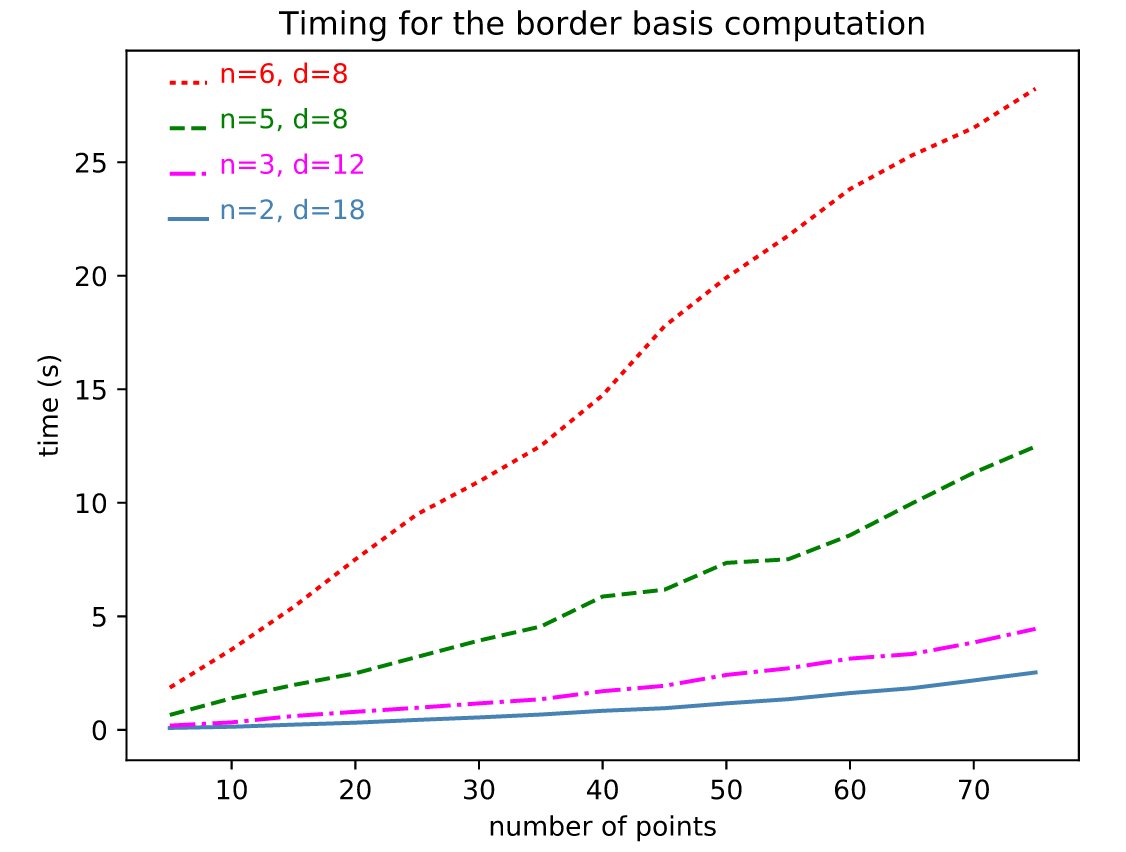} \label{fig:timing}}\\
Fig. 4.6: Vanishing ideal of random points.
 \end{center}

The timing is approximately linear in the number $r$ of points, with a slope
increasing quadratically in $n$.

{\small

}

\appendix
\section*{Examples \nopunct}

\begin{example}
  \ 
\end{example}

We consider the sequence $\sigma \in \mathbbm{K}^{\mathbbm{N}}$ such that
$\sigma_{d_1} = 1$ and $\sigma_i = 0$ for $0 \leqslant i \neq d_1 \leqslant d$
and $d_1 < d$. \

In the first step of the algorithm, we take $p_0 = 1$ and compute the first
$\gamma \in [0,\ldots,d]$ such that $\langle x^\gamma, p_1 \rangle_{\sigma}$ is not zero. This yields
$m_0 = x^{d_1}$ and $\tmmathbf{b}= [0]$, $\tmmathbf{c}= [d_1]$.

In a second step, we have $p_1 = x - \langle x, m_1 \rangle_{\sigma} p_0 = x$.
The first $\gamma \in [0,\ldots,d]\setminus\{d_{1}\}$ such that $\langle x^i, p_1 \rangle_{\sigma}$ is not zero
yields $\tmmathbf{b}= [0, 1]$, $\tmmathbf{c}= [d_1, d_1 - 1]$, $m_1 = x^{d_1 -
1}$.

We repeat this computation until $\tmmathbf{b}= [0, \ldots, d_1]$,
$\tmmathbf{c}= [d_1, d_1 - 1, \ldots, 1]$ with $m_{i} = x^{d_1 - i}$, $p_i =
x^i $ for $i = 0, \ldots, d_1$.

In the following step, we have $p_{d_1 + 1} = \tmop{proj} (x^{d_1 + 1},
\tmmathbf{p}, \tmmathbf{m}) = x^{d_1 + 1} - \langle x^{d_1 + 1}, m_1
\rangle_{\sigma} p_1 - \cdots - \langle x^{d_1 + 1}, m_{d_1} \rangle_{\sigma}
p_{d_1} = x^{d_{1 + 1}}$ such that $\langle x^{d_1 + 1}, x^j
\rangle_{\sigma} = 0$ for $0 \leqslant j \leqslant d$. The algorithm stops and
outputs $\tmmathbf{b}= [1, \ldots, x^{d_1}]$, $\tmmathbf{c}= [x^{d_1}, x^{d_1
- 1}, \ldots, 1]$, $\tmmathbf{k}= [x^{d_1 + 1}]$.\\

\begin{example}
\end{example}
We consider the function $h (u_1, u_2) = {\color{green} {\color{green} 2
+ 3} \hspace{0.25em}} \cdummy {\color{blue} 2^{u_1} 2^{u_2} {\color{green}
{\color{green} -}} 3^{u_1}}$. Its associated generating series is $\sigma
= \sum_{\alpha \in \mathbbm{N}^2} h (\alpha) \tmmathbf{z}^{\alpha} = 4 \noplus
+ 5 z_1 + 7 z_2 + 5 z_1^2 + 11 z_1 z_2 + 13 z_2^2 + \cdots$.

At the first step, we have $\mathbf{x}^{\tmmathbf{b}}= [1]$, $\tmmathbf{p}= [1]$,
$\tmmathbf{q}= \left[ \frac{1}{4} \right]$. At the second step, we compute
$\mathbf{x}^{\tmmathbf{b}}= [1, x_1, x_2]$, $\tmmathbf{p}= [1, x_1 - \frac{5}{4}, x_2 -
\frac{9}{5} x_1 - 4] = [p_1, p_{x_1}, p_{x_2}]$ and $\tmmathbf{q}= \left[
\frac{1}{4} p_1, - \frac{4}{5} p_{x_1}, \frac{5}{24} p_{x_2} \right]$. At the
next step, we obtain $\tmmathbf{k}= [], \tmmathbf{d}= [x_1^2, x_1
x_2, x_2^2]$.

\begin{eqnarray*}
  x_1 p_1 & \equiv & \frac{5}{4} p_1 + p_{x_1}\\
  x_1  \hspace{0.25em} p_{x_1} & \equiv & - \frac{5}{16} p_1 + \frac{91}{20}
  p_{x_1} - p_{x_2}\\
  x_1 p_{x_2} & \equiv & \sum_{i = 1}^3 \langle x_1 p_{x_2}, \tmmathbf{q}_i
  \rangle_{\sigma} \tmmathbf{p}_i = \frac{96}{25} p_{x_1} + \frac{1}{5}
  p_{x_2}
\end{eqnarray*}
The matrix of multiplication by $x_1$ in the basis $\tmmathbf{p}$ is
\[ M_1 = \left[ \begin{array}{ccc}
     \frac{5}{4} & - \frac{5}{16} & 0\\
     1 & \frac{91}{20} & \frac{96}{25}\\
     0 & - 1 & \frac{1}{5}
   \end{array} \right] . \]
Its eigenvalues are ${\color{blue} [\nobracket 1, 2, 3]}$ and the
corresponding matrix of eigenvectors is
\[ U \assign \left[ \begin{array}{ccc}
     \frac{1}{2} & \frac{3}{4} & - \frac{1}{4}\\
     \frac{2}{5} & - \frac{9}{5} & \frac{7}{5}\\
     - \frac{1}{2} & 1 & - \frac{1}{2}
   \end{array} \right], \]
that is, the polynomials $U (x) = [2 - \frac{1}{2}  \hspace{0.25em} x_1 -
\frac{1}{2}  \hspace{0.25em} x_2, - 1 + x_2, \frac{1}{2}  \hspace{0.25em} x_1
- \frac{1}{2}  \hspace{0.25em} x_2]$. By computing the Hankel matrix
\[ H_{\sigma}^{U, [1, x_1, x_2]} = \left[ \begin{array}{ccc}
     {\color{green} 2} & {\color{green} 3} & {\color{green} -
     1}\\
     {\color{green} {\color{green} 2 \times}}  {\color{blue} 1} &
     {\color{green} {\color{green} 3 \times}}  {\color{blue} 2} &
     {\color{green} {\color{green} - 1 \times}} {\color{blue} 3}\\
     {\color{green} {\color{green} 2 \times}}  {\color{blue} 1} &
     {\color{green} 3 \times}  {\color{blue} 2} & {\color{green}
     {\color{green} - 1 \times}} {\color{blue} 1}
   \end{array} \right] \]
we deduce the weights {\color{green} ${\color{green} {\color{green}
2, 3, - 1}}$} and the frequencies ${\color{blue} (1, 1),}$ ${\color{blue} (2,
2), (3, 1)}$, which corresponds to the decomposition $\sigma = e^{y_1 + y_2} +
3 e^{2 y_1 + 2 y_2} - e^{2 y_1 + y_2} $ associated to $h (u_1, u_2) = 2
\noplus + 3 \cdummy 2^{u_1 + u_2} - 3^{u_1}$.

\begin{example}
  \ 
\end{example}

We consider the following symmetric tensor or homogeneous polynomial:
{\small \[ \begin{array}{rl}
     \tau = & - x_0^4 - 24 \hspace{0.17em} x_0^3 x_1 - 8 \hspace{0.17em}
     x_0^3 x_2 - 60 \hspace{0.17em} x_0^2 x_1^2 - 168 \hspace{0.17em} x_0^2
     x_1 x_2 - 12 \hspace{0.17em} x_0^2 x_2^2\\
      & - 96 \hspace{0.17em} x_0 x_1^3 - 240 \hspace{0.17em} x_0 x_1^2 x_2
     - 384 \hspace{0.17em} x_0 x_1 x_2^2 + 16 \hspace{0.17em} x_0 x_2^3\\
      & - 46 \hspace{0.17em} x_1^4 - 200 \hspace{0.17em} x_1^3 x_2 - 228
     \hspace{0.17em} x_1^2 x_2^2 - 296 \hspace{0.17em} x_1 x_2^3 + 34
     \hspace{0.17em} x_2^4 .
           \end{array} \]
}%
The associated series is
\begin{eqnarray*}
  \sigma & = & - 1 - 6 \hspace{0.17em} z_1 - 2 \hspace{0.17em} z_2 - 10
  \hspace{0.17em} z_1^2 - 14 \hspace{0.17em} z_2 z_1 - 2 \hspace{0.17em}
  z_2^2\\
  &  & - 24 \hspace{0.17em} z_1^3 - 20 \hspace{0.17em} z_2 z_1^2 - 32
  \hspace{0.17em} z_2^2 z_1 + 4 \hspace{0.17em} z_2^3\\
  &  & - 46 \hspace{0.17em} z_1^4 - 50 \hspace{0.17em} z_2 z_1^3 - 38
  \hspace{0.17em} z_2^2 z_1^2 - 74 \hspace{0.17em} z_2^3 z_1 + 34
  \hspace{0.17em} z_2^4
\end{eqnarray*}
To decompose it into a sum of powers of linear forms, we apply the border
basis algorithm to the series $\sigma$. The algorithm projects successively
the monomials $1, x_1, x_2, x_1^2, x_1 x_2, x_2^2, \ldots$ onto the family of
polynomials $\tmmathbf{p}$, starting with $\tmmathbf{p}= [1]$. We obtain
$\mathbf{x}^{\tmmathbf{b}}=\tmmathbf{c}= [1, x_1, x_2]$, $\tmmathbf{p}= [1, x_1 - 6, x_2 +
\frac{1}{13} x_1 - \frac{32}{13}]$ and the border basis is
\[ \tmmathbf{k}= [{\color{red} x_1^2} - \frac{3}{2} x_1 - \frac{3}{2} x_2 + 2,
   {\color{red} x_1 x_2} - \frac{5}{2} x_1 - \frac{1}{2} x_2 + 2, {\color{red}
   x_2^2} + \frac{1}{2} x_1 - \frac{7}{2} x_2 + 2], \]
giving the projection of the border monomials $\tmmathbf{d}= [{\color{red}
x_1^2, x_1 x_2, x_2^2}]$ on the basis $\mathbf{x}^{\tmmathbf{b}}$. The decomposition of
$\tau$ is deduced from the eigenvectors of the operator of multiplication by
$x_1$:

{\scriptsize \[ M_1 = \left[ \begin{array}{ccc}
     0 & - 2 & - 2\\
     0 & \frac{1}{2} & \frac{3}{2}\\
     1 & \frac{5}{2} & \frac{3}{2}
   \end{array} \right] . \]}

Its eigenvalues are $[{\color{red} - 1, 1, 2]}$ and the eigenvectors
correspond to the polynomials
\[ \tmmathbf{u}= \left[ \begin{array}{ccc}
     \frac{1}{2}  \hspace{0.17em} x_2 - \frac{1}{2}  \hspace{0.17em} x_1 & - 2
     + \frac{3}{4}  \hspace{0.17em} x_2 + \frac{1}{4}  \hspace{0.17em} x_1 & -
     1 + \frac{1}{2}  \hspace{0.17em} x_2 + \frac{1}{2}  \hspace{0.17em} x_1
   \end{array} \right] . \]
Computing $\omega_i = \langle \sigma \mid \tmmathbf{u}_i \rangle$ and $\xi_{i,
j} = \frac{\langle \sigma \mid x_j \tmmathbf{u}_i \rangle}{\langle \sigma \mid
\tmmathbf{u}_i \rangle}$ (see {\cite{mourrain_polynomial-exponential_2016}}),
we obtain the decomposition:
\[ \tau = \left( x_0 - x_1 + 3 \hspace{0.17em} x_2 \right)^4 + (x_0 + x_1 +
   x_2)^4 - 3 \hspace{0.17em} \left( x_0 + 2 \hspace{0.17em} x_1 + 2
   \hspace{0.17em} x_2 \right)^4 . \overset{}{} \]

\begin{example}
  \ 
\end{example}
We consider the algebraic code over $\kk=\ZZ/32003\ZZ$ defined by
$$ 
C =\{ c\in \kk^{11} \mid  \sum_{i=1}^{11} c_{i}\, \xi_{i}^{\alpha} =0,\ \forall  \alpha\in \mathbbm{N}^{3}\ s.t.\  |\alpha|\leq 2\}
$$
where
$$ 
\Xi = \left[ \begin{array}{ccccccccccc}
1 & 1 & 1 &-1 &-1 &0 &0 & 2  &1 & 1 & 0\\
0 & 1 &-1 & 1 &-1 & 1 &1 &-1 & 2 &-2 & 0\\
0 & 0 &  0 & 0 &  0 &0 &1  & 1  & 1  & 1 & 1\\
\end{array}
\right]
$$
and $\xi_{i}$ is the $i^{\mathrm{th}}$ column of $\Xi$. Suppose that we receive the word
$$ 
r = [0, 3, 3, 3, 0, 0, -6, -2, 0, -1, 0]
$$
which is the sum $r= c+\omega$ of a code word $c\in C$ and an error vector $\omega\in \kk^{11}$.
We want to correct it and find the corresponding word $c$ of the code $C$.

Computing the syndromes $\sigma_{\alpha}= \sum_{i=1}^{11} r_{i} \xi_{i}^{\alpha}= \sum_{i=1}^{11} \omega_{i} \xi_{i}^{\alpha}$ for $|\alpha|\le 2$ and the corresponding (truncated) generating series, we get
$$ 
\sigma = - 2\, z_1 + z_2  + 3\,z_1\, z_2 - 2\,z_1\,z_3 - 3\,z_2^2 + z_2\,z_3.
$$
We apply the border basis algorithm to obtain error locator polynomials.
The monomials are considered in the order $\mathbf{x}^{\mathbf{a}}=[1, x_{1},x_{2},x_{3},x_{1}^{2}, x_{1}\,x_{2},\ldots,x_{3}^{2}]$. Here are the different steps, where $\nw$ denotes the new monomial introduced at each loop of the algorithm.

\noindent{}Step 1.  $\nw=1$, $\mathbf{x}^{\tmmathbf{b}}=[1]$, $\mathbf{x}^{\tmmathbf{c}}= [ x_1]$, $\tmmathbf{k}= []$.

\noindent{}Step 2. $\nw=x_{1}$, $\mathbf{x}^{\tmmathbf{b}}=[1,x_{1}]$, $\mathbf{x}^{\tmmathbf{c}}= [x_1,1]$, $\tmmathbf{k}= []$.

\noindent{}Step 3. $\nw=x_{2}$,  $\mathbf{x}^{\tmmathbf{b}}=[1,x_{1}]$, $\mathbf{x}^{\tmmathbf{c}}= [x_1,1]$, $\tmmathbf{k}= [x_{2}+\frac{1}{2} x_{1}+\frac{3}{2}]$.

\noindent{}Step 4. $\nw=x_{3}$, $\mathbf{x}^{\tmmathbf{b}}=[1,x_{1}]$, $\mathbf{x}^{\tmmathbf{c}}= [x_1,1]$, $\tmmathbf{k}= [x_{2}+\frac{1}{2} x_{1}+\frac{3}{2}, x_{3}-1]$.

The algorithm stops at this step, since the new monomial $\nw=x_{1}^{2}$ is of degree $2$ and $\nw \cdummy \mathbf{x}^{\tmmathbf{c}} \not\subset \mathbf{x}^{\mathbf{a}}$. It outputs two error locator polynomials: $x_{2}+\frac{1}{2} x_{1}+\frac{3}{2}, x_{3}-1$.

We check that only $\xi_{5}, \xi_{10}$ are roots of the error locator polynomials. We deduce the non-zero weights $\omega_{5}, \omega_{10}$ by solving the system
$ \omega_{5} \xi_{5}^{\alpha}+ \omega_{10} \xi_{10}^{\alpha} = \sigma_{\alpha}$ for $\alpha\in \{(0,0,0), (1,0,0)\}$. This yields $\omega_{5}=1, \omega_{10}=-1$, so that the code word is 
$$ 
c = [ 0, 3, 3, 3, -1, 0, -6, -2, 0, 0, 0].
$$
\end{document}